\newtheorem{theorem}{Theorem}[section]
\newtheorem{lemma}[theorem]{Lemma}
\newtheorem{remark}[theorem]{Remark}
\newtheorem{prop}[theorem]{Proposition}
\newtheorem{example}[theorem]{Example}
\newtheorem{corollary}[theorem]{Corollary}
\numberwithin{equation}{section}
\newcommand{\R}{{\mathbb R}}
\newcommand{\D}{{\mathbb D}}
\newcommand{\C}{{\mathbb C}}
\newcommand{\N}{{\mathbb N}}
\newcommand{\cL}{{\mathcal L}}
\newcommand{\cS}{{\mathcal S}}
\newcommand{\cU}{{\mathcal U}}
\newcommand{\ve}{\varepsilon}
\newcommand{\be}{\beta}
\newcommand{\si}{\sigma}
\newcommand{\su}{\subseteq}
\newcommand\Ker{\mathop{\rm Ker}}
\newcommand{à}{\`a}
\begin{document}

\title{Generalized Ces\`aro operators in weighted Banach spaces of analytic functions with sup-norms}

\author{Angela\,A. Albanese, Jos\'e Bonet and Werner\,J. Ricker}%A.\,A. Albanese\textsuperscript{*}, J. Bonet\textsuperscript{+} and W.\,J. Ricker}

\thanks{\textit{Mathematics Subject Classification 2020:} 
Primary 46E15, 47B38; Secondary 46E10, 47A10, 47A16, 47A35.}
%\thanks{\textsuperscript{*} Support of the Alexander von Humboldt Foundation is gratefully acknowledged.}
%\thanks{\textsuperscript{*} Research partially supported by    MEC andFEDER Project MTM 2007-62643, GV Project Prometeo/2008/101 and the net MTM 2007--30904--E (Spain).}
\keywords{Generalized Ces\`aro operator, weighted Banach spaces of analytic functions, compact operator, spectrum, supercyclic, mean ergodic, power bounded }

\address{ Angela A. Albanese\\
Dipartimento di Matematica ``E.De Giorgi''\\
Universit\`a del Salento- C.P.193\\
I-73100 Lecce, Italy}
\email{angela.albanese@unisalento.it}

\address{Jos\'e Bonet \\
Instituto Universitario de Matem\'{a}tica Pura y Aplicada
IUMPA \\
Edificio IDI5 (8E), Cubo F, Cuarta Planta \\
Universidad Politécnica de Valencia \\
E-46071 Valencia, Spain} \email{jbonet@mat.upv.es}

\address{Werner J.  Ricker \\
Math.-Geogr. Fakultät \\
 Katholische Universität
Eichst\"att-Ingol\-stadt \\
D-85072 Eichst\"att, Germany}
\email{werner.ricker@ku.de}

\begin{abstract}
An investigation is made of the generalized Cesàro operators $C_t$, for $t\in [0,1]$, when they act on the space $H(\D)$ of holomorphic functions on the open unit disc $\D$, on the Banach space $H^\infty$ of bounded analytic functions and on the weighted Banach spaces $H_v^\infty$ and $H_v^0$ with their sup-norms. Of particular interest are  the continuity, compactness, spectrum and point spectrum of $C_t$ as well as their linear dynamics and mean ergodicity. 
\end{abstract}
\maketitle

\markboth{A.\,A. Albanese, J. Bonet and W.\,J. Ricker}%
{\MakeUppercase{Generalized Ces\`aro operators}}

\section{Introduction and Preliminaries}

The (discrete) generalized Cesàro operators $C_t$, for $t\in [0,1]$, were first investigated by Rhaly, \cite{R1}, \cite{R2}. The action of $C_t$  from the sequence space $\omega:=\C^{\N_0}$ into itself, with $\N_0:=\{0,1,2,\ldots\}$, is given by
\begin{equation}\label{Ces-op}
	C_tx:=\left(\frac{t^nx_0+t^{n-1}x_1+\ldots +x_n}{n+1}\right)_{n\in\N_0},\quad x=(x_n)_{n\in\N_0}\in\omega.
	\end{equation}
For $t=0$ and with $\varphi:=(\frac{1}{n+1})_{n\in\N_0}$ note that $C_0$ is the diagonal operator
\begin{equation}\label{Dia-op}
	D_\varphi x:= \left(\frac{x_n}{n+1}\right)_{n\in\N_0}, \quad x=(x_n)_{n\in\N_0}\in\omega,
	\end{equation}
and, for $t=1$, that $C_1$ is the classical Ces\`aro averaging operator
\begin{equation}\label{Ces-1}
	C_1x:=\left(\frac{x_0+x_1+\ldots+x_n}{n+1}\right)_{n\in\N_0},\quad x=(x_n)_{n\in\N_0}\in\omega.
\end{equation}
The behaviour of $C_t$ on various sequence spaces has been investigated by many authors. We refer the reader to \cite{R1}, \cite{R2}, \cite{Rho}, to the recent papers \cite{SEl-S}, \cite{YD}, \cite{YM} and to the introduction of the papers \cite{ABR-N}, \cite{CR4} and the references therein.  The  operator $C_1$ was thoroughly investigated on weighted Banach spaces  in \cite{ABR-R}; see also \cite{CR2}. Certain variants of the Ces\`aro operator $C_1$ are considered in \cite{Blasco}, \cite{GGM}. 

Our aim is to investigate the operators $C_t$, for $t\in [0,1]$, when they are suitably interpreted to act on the space $H(\D)$ of holomorphic functions on the open unit disc $\D:=\{z\in\C\ :\ |z|<1\}$, on the Banach space $H^\infty$ of bounded analytic functions and on the weighted Banach spaces $H_v^\infty$ and $H_v^0$ with their sup-norms. 
The space $H(\D)$ is equipped with the topology $\tau_c$ of uniform convergence on the compact subsets of  $\D$. According to \cite[\S 27.3(3)]{23} the space $H(\D)$ is a Fr\'echet-Montel space. A family of norms generating $\tau_c$ is given, for each $0<r<1$, by
\begin{equation}\label{eq.norme-sup}
	q_r(f):=\sup_{|z|\leq r}|f(z)|,\quad f\in H(\D).
\end{equation}
 	
A \textit{weight} $v$ is a continuous, non-increasing function
$v\colon [0,1)\to (0,\infty)$. We extend $v$ to $\D$ by setting $v(z):=v(|z|)$, for $z\in\D$. Note that $v(z)\leq v(0)$ for all $z\in\D$. Given a weight $v$ on $[0,1)$, we define the corresponding \textit{weighted Banach spaces of analytic functions} on $\D$ by
\[
H_v^\infty:=\{f\in H(\D)\, :\, \|f\|_{\infty,v}:=\sup_{z\in\D}|f(z)|v(z)<\infty\},
\]
and
\[
H^0_v:=\{f\in H(\D)\, :\, \lim_{|z|\to 1^-}|f(z)|v(z)=0\},
\]
both endowed with the norm $\|\cdot\|_{\infty,v}$. Since $\|f\|_{\infty,v}\leq v(0)\|f\|_\infty$ whenever $f\in H^\infty$, it is clear that $H^\infty\su H^\infty_v$ with a continuous inclusion.
If $v(z)=1$ for all $z\in\D$, then $H^\infty_v$  coincides with the space $H^\infty$ of all bounded analytic functions on $\D$ with the sup-norm $\|\cdot\|_\infty$ and $H^0_v$ reduces to $\{0\}$.
Moreover, $H^\infty_v\su H(\D)$ continuously. Indeed, fix $0<r<1$. Then $\frac{1}{v(0)}\leq \frac{1}{v(z)}\leq \frac{1}{v(r)}$ for $|z|\leq r$ and so \eqref{eq.norme-sup} implies that
\[
q_r(f)=\sup_{|z|\leq r}\frac{v(z)|f(z)|}{v(z)}\leq \frac{1}{v(r)}\sup_{|z|\leq r}v(z)|f(z)|\leq \frac{1}{v(r)}\|f\|_{\infty, v},\quad f\in H^\infty_v.
\]
 We refer the reader to \cite{Bonet} for a recent survey of such types of weighted Banach spaces and operators between them. 

Whenever necessary we  will identify  a function $f\in H(\D)$ with its sequence of Taylor coefficients $\hat{f}:=(\hat{f}(n))_{n\in\N_0}$ (i.e., $\hat{f}(n):=\frac{f^{(n)}(0)}{n!}$,  for $n\in\N_0$),  so that $f(z)=\sum_{n=0}^\infty \hat{f}(n)z^n$, for $z\in\D$. The linear map $\Phi\colon H(\D)\to\omega$ is defined by
\[
\Phi(f=\sum_{n=0}^\infty \hat{f}(n)z^n):=\hat{f},\quad f\in H(\D).
\]
It is injective (clearly) and continuous. Indeed, for each $m\in\N_0$,
\[
r_m(x):=\max_{0\leq j\leq m}|x_j|,\quad x=(x_j)_{j\in\N_0}\in \omega,
\]
is a continuous seminorm in $\omega$. Fix $0<r<1$, in which case
\begin{align*}
	r_m(\Phi(f))&=\max_{0\leq j\leq m}|\hat{f}(j)|=\max_{0\leq j\leq m}\left|\frac{1}{2\pi i}\int_{|z|=r}\frac{f(z)}{z^{j+1}}\, dz\right|\leq \max_{0\leq j\leq m}\sup_{|z|=r}\frac{|f(z)|}{|z|^j}\\
	&=\max_{0\leq j\leq m}\frac{1}{r^j}q_r(f)\leq \frac{1}{r^m}q_r(f),
\end{align*}
for each $f\in H(\D)$ because $\frac{1}{r^j}\leq \frac{1}{r^m}$ for all $0\leq j\leq m$. Of course, the increasing sequence of seminorms $\{r_m\ :\ m\in\N_0\}$ generates the topology of $\omega$.

We first provide an integral representation of the generalized Cesàro operators $C_t$ defined on $H(\D)$, for $t\in [0,1)$. So, fix $t\in [0,1)$ and define   $C_t\colon H(\D)\to H(\D)$  by $ C_tf(0):=f(0)$ and
\begin{equation}\label{eq.formula-int}
C_tf(z):=\frac{1}{z}\int_0^z\frac{f(\xi)}{1-t\xi}\,d\xi,\ z\in \D\setminus\{0\},
\end{equation}
for every $f\in H(\D)$. It turns out that $C_t$ is continuous on $H(\D)$; see Proposition \ref{Prop-Cont-H(D)}. Moreover, the discrete Cesàro operator $C_t\colon\omega\to\omega$, when restricted to the subspace $\Phi(H(\D))\su\omega$ is transferred to $H(\D)$ as follows.
For a fixed $f\in H(\D)$ we have $f(\xi)=\sum_{n=0}^\infty a_n \xi^n$, for $\xi\in \D$, with $\hat{f}=(a_n)_{n\in\N_0}$ its sequence of Taylor coefficients. Since $\frac{1}{1-t\xi}=\sum_{n=0}^\infty t^n\xi^n$, for $\xi\in\D$, we can form the Cauchy product of the two series, thereby obtaining
\[
\frac{f(\xi)}{1-t\xi}=\sum_{n=0}^\infty(\sum_{k=0}^{n}t^{n-k}a_k)\xi^n, \quad \xi\in \D.
\]
Then \eqref{eq.formula-int} yields
\[
zC_tf(z)=\int_0^z\sum_{n=0}^\infty(\sum_{k=0}^{n}t^{n-k}a_k)\xi^n\,d\xi=\sum_{n=0}^\infty\left(\frac{t^na_0+t^{n-1}a_1+\ldots +a_n}{n+1}\right)z^{n+1},\ z\in\D.
\]
The interchange of the infinite sum and the integral is permissible by uniform convergence of the series. This shows that $C_tf\in H(\D)$ also has the series representation
\begin{align}\label{eq.rapp-serie}
	&C_tf(z)=\sum_{n=0}^\infty\left(\frac{t^na_0+t^{n-1}a_1+\ldots +a_n}{n+1}\right)z^{n}\nonumber\\
	&=\sum_{n=0}^\infty\left(\frac{t^n\hat{f}(0)+t^{n-1}\hat{f}(1)+\ldots +\hat{f}(n)}{n+1}\right)z^{n}=\sum_{n=0}^\infty(C_t^\omega(\hat{f}))_nz^n,
\end{align} 
where the coefficients of the series are precisely as in \eqref{Ces-op}. For the sake of clarity we will denote the discrete generalized Cesàro operator $C_t\colon \omega\to\omega$ by $C_t^\omega$ and reserve the notation $C_t$ for the operator \eqref{eq.formula-int} acting in $H(\D)$. Note that $C_0^\omega=D_\varphi$ (see \eqref{Dia-op}). Moreover, $C_0$ is given by $C_0f(z)=\frac{1}{z}\int_0^z f(\xi)\,d\xi$ for $z\not=0$ and $C_0f(0)=f(0)$, which is the classical Hardy operator in $H(\D)$.

The main results for $C_t$ when acting in the Fr\'echet space $H(\D)$ occur in Proposition \ref{Prop-Cont-H(D)} (continuity), Proposition \ref{NonCompact} (non-compactness), Proposition \ref{Spectrum-H(D)} (spectra) and Proposition \ref{PowerMean-H(D)} (linear dynamics and mean ergodicity). For the analogous information concerning $C_t$ when acting in the weighted Banach spaces $H^\infty_v$ and $H^0_v$ see Proposition \ref{Cont_HInfty_v} and Corollary \ref{Cont_H0_v} (continuity), Proposition \ref{Compact} (compactness), Proposition \ref{Spectrum}  (spectra)  and Proposition \ref{Dyn-Hv} (linear dynamics and mean ergodicity).

We end this section by recalling a few definitions  and some notation concerning locally convex spaces and operators between them. For further details about functional analysis and operator theory relevant to this paper see, for example, \cite{Ed,Gr,J,23,24,Wa}.

Given locally convex Haudorff spaces $X, Y$ (briefly, lcHs) we denote by $\cL(X,Y)$ the space of all continuous linear operators from $X$ into $Y$. If $X=Y$, then we simply write $\cL(X)$ for $\cL(X,X)$. Equipped with the topology of pointwise convergence on $X$ (i.e., the strong operator topology) the lcHs $\cL(X)$ is denoted by $\cL_s(X)$. Equipped with the topology $\tau_b$ of uniform convergence on the bounded subsets of $X$ the lcHs $\cL(X)$ is denoted by $\cL_b(X)$.

Let $X$ be a lcHs space. 
 The identity operator on $X$ is denoted by $I$.  The \textit{transpose operator} of $T\in \cL(X)$ is denoted by  $T'$; it acts from the topological dual space $X':=\cL(X,\C)$ of $X$ into itself. Denote by $X'_\si$ (resp., by $X'_\beta$) the topological dual $X'$ equipped with the weak* topology $\si(X',X)$ (resp., with the strong topology $\beta(X',X)$); see \cite[\S 21.2]{23} for the definition. It is known that $T'\in \cL(X'_\si)$ and $T'\in \cL(X_\be')$,  \cite[p.134]{24}. The bi-transpose operator $(T')'$ of $T$ is simply denoted by $T''$ and belongs to $\cL((X'_\beta)'_\beta)$.

 A linear map $T\colon X\to Y$, with $X,Y$ lcHs', is called \textit{compact} if there exists a neighbourhood $\cU$ of $0$ in $X$ such that $T(\cU)$ is a relatively compact set in $Y$. It is routine to show that necessarily $T\in \cL(X,Y)$. We recall the following well known result; see \cite[Proposition 17.1.1]{J}, \cite[\S 42.1(1)]{24}.
 
 \begin{lemma}\label{L-Comp} Let $X$ be a lcHs. The compact operators are a 2-sided ideal in $\cL(X)$.
 \end{lemma}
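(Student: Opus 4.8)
The plan is to write $\mathcal{K}(X)$ for the set of compact operators in $\cL(X)$ and to verify two things: that $\mathcal{K}(X)$ is a linear subspace of $\cL(X)$, and that $TS$ and $ST$ lie in $\mathcal{K}(X)$ whenever one of the factors is compact and the other is merely continuous. Throughout I would rely on two elementary facts valid in any (Hausdorff) topological vector space: first, that a continuous map sends relatively compact sets to relatively compact sets (since the closure of the image is contained in the compact image of the closure, which is a compact, hence closed, set); and second, that the sum $A+B$ of two compact sets is compact, being the image of $A\times B$ under the continuous addition map.

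For the subspace property, closure under scalar multiples is immediate: if $T(\cU)$ is relatively compact for some neighbourhood $\cU$ of $0$, then $(\lambda T)(\cU)=\lambda\, T(\cU)$ is relatively compact because multiplication by $\lambda$ is continuous (the case $\lambda=0$ also gives $0\in\mathcal{K}(X)$). For additivity, suppose $S(\cU_1)$ and $T(\cU_2)$ are relatively compact for neighbourhoods $\cU_1,\cU_2$ of $0$, and set $\cU:=\cU_1\cap\cU_2$, again a neighbourhood of $0$. Then
\[
(S+T)(\cU)\su S(\cU)+T(\cU)\su \ov{S(\cU_1)}+\ov{T(\cU_2)},
\]
and the right-hand side is compact as a sum of two compact sets; hence $(S+T)(\cU)$ is relatively compact and $S+T\in\mathcal{K}(X)$.

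For the ideal property, fix $S\in\mathcal{K}(X)$ with $S(\cV)$ relatively compact for some neighbourhood $\cV$ of $0$, and let $T\in\cL(X)$. To handle $TS$, note that $TS(\cV)=T(S(\cV))$ is the continuous image of a relatively compact set, hence relatively compact, so $TS\in\mathcal{K}(X)$. To handle $ST$, use continuity of $T$ to see that $\cU:=T^{-1}(\cV)$ is a neighbourhood of $0$; then $ST(\cU)=S(T(\cU))\su S(\cV)$ is relatively compact, so $ST\in\mathcal{K}(X)$. I expect the only genuinely delicate point to be the additivity step, where one must invoke compactness of the sum of two compact sets rather than argue on a single neighbourhood; everything else reduces to the stability of relative compactness under continuous maps together with the fact that preimages of neighbourhoods under continuous linear maps are neighbourhoods.
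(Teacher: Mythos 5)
Your argument is correct and complete, but note that the paper does not actually prove this lemma: it records it as a well-known fact and simply cites Jarchow (Proposition 17.1.1) and K\"othe (\S 42.1(1)), so there is no internal proof to compare against. What you have written is essentially the standard textbook argument underlying those references, and every step checks out: stability of relative compactness under continuous maps (where Hausdorffness of $X$ is genuinely used twice, once so that the compact image of the closure is closed, and once so that the closure of a subset of a compact set is compact); compactness of $A+B$ as the continuous image of the compact product $A\times B$; the intersection trick $\cU:=\cU_1\cap\cU_2$ for additivity, which correctly works with the closures $\ov{S(\cU_1)}+\ov{T(\cU_2)}$ rather than with the images themselves; and the preimage trick $\cU:=T^{-1}(\cV)$ for $ST$, which is the right move --- pushing a neighbourhood through $T$ first would fail, since $T(\cU)$ need not be relatively compact, whereas $ST(\cU)\su S(\cV)$ settles it at once. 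One cosmetic remark: for ``$2$-sided ideal in $\cL(X)$'' to be meaningful one also needs that compact operators are automatically continuous, i.e. $\mathcal{K}(X)\su\cL(X)$; the paper disposes of this in the sentence preceding the lemma (``It is routine to show that necessarily $T\in\cL(X,Y)$''), and your proof, which quantifies only over operators already in $\cL(X)$, is consistent with that convention, though a one-line acknowledgement would make the proposal fully self-contained.
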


Given a lcHs $X$ and $T\in \cL(X)$, the resolvent set $\rho(T;X)$ of $T$ consists of all $\lambda\in\C$ such that $R(\lambda,T):=(\lambda I-T)^{-1}$ exists in $\cL(X)$. The set $\sigma(T;X):=\C\setminus \rho(T;X)$ is called the \textit{spectrum} of $T$. The \textit{point spectrum}  $\sigma_{pt}(T;X)$ of $T$ consists of all $\lambda\in\C$ (also called an eigenvalue of $T$) such that $(\lambda I-T)$ is not injective. Some authors (eg. \cite{Wa}) prefer the subset $\rho^*(T;X)$ of $\rho(T;X)$ consisting of all $\lambda\in\C$ for which there exists $\delta>0$ such that the open disc $B(\lambda,\delta):=\{z\in\C:\, |z-\lambda|<\delta\}\su \rho(T;X)$ and $\{R(\mu,T):\, \mu\in B(\lambda,\delta)\}$ is an equicontinuous subset of $\cL(X)$. Define $\sigma^*(T;X):=\C\setminus \rho^*(T;X)$, which is a closed set with $\sigma(T;X)\su \sigma^*(T;X)$. For the spectral theory of compact operators in lcHs' we refer to \cite{Ed}, \cite{Gr}, for linear dynamics to \cite{B-M}, \cite{G-P} and for mean ergodic operators to \cite{K}, for example.

\section{Continuity, Compactness and Spectrum of $C_t$}

In this section we establish, for $t\in [0,1)$, the continuity of $C_t\colon H(\D)\to H(\D)$ as well as the continuity of $C_t$ from $H^\infty$ (resp., $H^\infty_v$) into $H^\infty$ (resp., $H^\infty_v$). The same is true for $C_t\colon H^0_v\to H^0_v$ whenever $\lim_{r\to 1^-}v(r)=0$. It is also shown that the bi-transpose $C_t''$ of $C_t\in \cL(H^0_v)$ is the generalized Cesàro operator $C_t\in \cL(H^\infty_v)$, provided that $\lim_{r\to 1^-}v(r)=0$. For such weights $v$ it also turns out that both $C_t\in \cL(H^0_v)$  and $C_t\in \cL(H^\infty_v)$ are compact operators (cf. Proposition \ref{Compact}); their spectrum is identified in Proposition \ref{Spectrum}. Of particular interest are the standard weights $v_\gamma(z):=(1-|z|)^\gamma$, for $\gamma>0$ and $z\in\D$.

\begin{prop}\label{Prop-Cont-H(D)} For every $t\in [0,1)$ the  operator $C_t\colon H(\D)\to H(\D)$ is continuous. Moreover, the set $\{C_t\,:\, t\in[0,1)\}$ is equicontinuous in $\cL(H(\D))$.
\end{prop}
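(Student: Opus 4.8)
The plan is to prove both assertions at once by producing a single seminorm estimate that is uniform in $t$. Since $H(\D)$ is a Fr\'echet space whose topology is generated by the directed family $\{q_r\,:\,0<r<1\}$ from \eqref{eq.norme-sup}, continuity of the linear map $C_t$ amounts to showing that for each $r$ there exist a seminorm $q_s$ and a constant $M>0$ with $q_r(C_tf)\leq M\,q_s(f)$ for all $f\in H(\D)$; equicontinuity of $\{C_t\,:\,t\in[0,1)\}$ in $\cL(H(\D))$ is precisely the statement that $M$ and $s$ can be chosen independently of $t$. Thus it suffices to exhibit such an estimate with a $t$-free constant.

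First I would rewrite the defining integral \eqref{eq.formula-int} in a form adapted to the seminorms $q_r$. The integrand $\xi\mapsto f(\xi)/(1-t\xi)$ is holomorphic on $\D$, since $1-t\xi$ has its only zero at $\xi=1/t\notin\D$ for $t\in[0,1)$; hence the integral is path-independent, and parametrizing the segment from $0$ to $z$ by $\xi=sz$, $s\in[0,1]$, yields, for $z\in\D\setminus\{0\}$,
\begin{equation*}
C_tf(z)=\int_0^1\frac{f(sz)}{1-tsz}\,ds,
\end{equation*}
a representation whose limit as $z\to 0$ is $f(0)$, so that it is in fact valid at $z=0$ as well.

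Next I would estimate the two factors of the integrand for $|z|\leq r$. Because $t\in[0,1)$ and $s\in[0,1]$, one has $|tsz|\leq r$, whence $|1-tsz|\geq 1-r$; and $|sz|\leq r$ gives $|f(sz)|\leq q_r(f)$. Integrating over $s\in[0,1]$ produces
\begin{equation*}
|C_tf(z)|\leq\int_0^1\frac{|f(sz)|}{|1-tsz|}\,ds\leq\frac{q_r(f)}{1-r},\qquad |z|\leq r,
\end{equation*}
and taking the supremum over $|z|\leq r$ gives $q_r(C_tf)\leq\frac{1}{1-r}\,q_r(f)$. Crucially, both the seminorm on the right (namely $q_r$ itself) and the constant $\frac{1}{1-r}$ are free of $t$, so this one inequality delivers the continuity of each $C_t$ and the equicontinuity of the whole family in a single stroke.

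There is no genuine obstacle here; the only points deserving a word of care are the path-independence that legitimizes the reparametrization, and the lower bound on $|1-tsz|$, where one must use $t\leq 1$ uniformly rather than a $t$-dependent estimate so that the resulting constant remains independent of $t$. The removable singularity at $z=0$ is already dealt with in the discussion preceding the statement (where $C_tf\in H(\D)$ is verified), and at that point only the observation $|C_tf(0)|=|f(0)|\leq q_r(f)$ is needed, which is dominated by the same bound.
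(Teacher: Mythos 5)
Your proof is correct and follows essentially the same route as the paper: the authors also derive the single $t$-uniform estimate $q_r(C_tf)\leq \frac{1}{1-r}\,q_r(f)$ from the integral formula \eqref{eq.formula-int}, using the bound $|1-t\xi|\geq 1-t|\xi|\geq 1-|z|$ on the segment $[0,z]$ (your substitution $\xi=sz$ is just an explicit parametrization of their $\max_{\xi\in[0,z]}$ estimate). The only cosmetic difference is that you integrate the bound over $s\in[0,1]$ while they bound the integral by segment length times the maximum of the integrand; both yield the identical conclusion.
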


\begin{proof} Fix $f\in H(\D)$. Taking
	into account that $C_tf(0)=f(0)$, for all $t\in [0,1)$ and, for each $r\in (0,1)$, that $\sup_{|z|\leq r}|C_tf(z)|=\sup_{|z|=r}|C_tf(z)|$, the formula \eqref{eq.formula-int} implies, for each $z\in\D\setminus\{0\}$, that
	\begin{align*}
		|C_tf(z)|&=\frac{1}{|z|}\left|\int_0^z\frac{f(\xi)}{1-t\xi}d\xi\right|\leq \frac{1}{|z|}|z|\max_{\xi\in [0,z]}\frac{|f(\xi)|}{|1-t\xi|}\\&\leq \frac{1}{1-|z|}\max_{|\xi|\leq |z|}|f(\xi)|=\frac{1}{1-|z|}\max_{|\xi|=|z|}|f(\xi)|,
	\end{align*}
because
$|1-t\xi|\geq 1-t|\xi|\geq 1-|\xi|\geq 1-|z|$, for all $|\xi|\leq |z|$. It follows from the previous inequality, for each $r\in (0,1)$, that
\[
q_r(C_tf)=\sup_{|z|\leq r}|C_tf(z)|\leq \frac{1}{1-r}\sup_{|\xi|\leq r}|f(\xi)|=\frac{1}{1-r}q_r(f);.
\]
see \eqref{eq.norme-sup}.	This implies the result.
\end{proof}

The following example will prove to be  useful in the sequel.
	
\begin{example}\label{Ex.Main}\rm Consider the constant function $f_1(z):=1$,  for every $z\in\D$, in which case  $C_tf_1(0)=f_1(0)=1$ for every $t\in [0,1]$. For $t=0$, it was noted in Section 1 that $C_0$ is
	%\[
%	C_0f(z)=\frac{1}{z}\int_0^z f(\xi)d\xi, \ z\in \D\setminus\{0\},\  C_0f(0)=f(0),
%	\]
the Hardy operator. In particular, $C_0f_1(z)=1$, for every $z\in\D$. For $t\in (0,1]$, note that $C_tf_1(0)=1$ and
	\[
	C_tf_1(z)=\frac{1}{z}\int_0^z \frac{d\xi}{1-t\xi}=-\frac{1}{tz}\log (1-tz),\quad z\in \D\setminus\{0\}.
	\]
For $t=1$ this shows, in particular, that $C_1(H^\infty)\not\subset H^\infty$, which is well known. For an investigation of the operator $C_1$ acting in $H^\infty$ we refer to \cite{D-S}.
	
	Concerning  $t\in (0,1)$, recall the Taylor series expansion
	\[
	-\log (1-z)=z\sum_{n=0}^\infty \frac{z^n}{n+1},\quad z\in \D,
	\]
	from which it follows that
	\[
	-\frac{\log(1-tz)}{tz}=\sum_{n=0}^\infty\frac{t^n}{n+1}z^n, \quad z\in \D\setminus\{0\},
	\]
	with the series having radius of convergence $\frac{1}{t}>1$. The claim is that $\|C_tf_1\|_\infty=\sup_{|z|<1}|C_tf_1(z)|=-\frac{\log (1-t)}{t}$. Indeed,  $C_tf_1$ is clearly holomorhic in $B(0,\frac{1}{t}):=\{\xi\in \C\, :\, |\xi|<\frac{1}{t}\}$ hence, continuous in $B(0,\frac{1}{t})$, and satisfies $C_tf_1(1)=-\frac{\log(1-t)}{t}$ with  $\lim_{r\to 1^-}C_tf_1(r)=C_tf_1(1)$. On the other hand, for every $z\in\D\setminus\{0\}$ and $t\in (0,1)$ we have that
\[
|C_tf_1(z)|=\left|-\frac{\log (1-tz)}{tz}\right|\leq \sum_{n=0}^\infty\frac{t^n}{n+1}|z|^n\leq \sum_{n=0}^\infty\frac{t^n}{n+1}=-\frac{\log(1-t)}{t}.
\]
This completes the proof of the claim. Observe that $\|C_tf_1\|_\infty>1$. Indeed, define $\gamma(t)=-\log (1-t)-t$, for $t\in [0,1)$. Then $\gamma(0)=0$, $\lim_{t\to 1^-}\gamma(t)=\infty$ and $\gamma'(t)=\frac{1}{1-t}-1=\frac{t}{1-t}$, for $t\in [0,1)$. Since $\gamma'(t)>0$, for  $t\in (0,1)$, it follows that $\gamma$ is strictly increasing and so $\gamma(t)>0$ for all $t\in (0,1)$. This implies that $\|C_tf_1\|_\infty=-\frac{\log (1-t)}{t}>1$ for every $t\in (0,1)$. On the other hand, for $t\in (0,1)$, the inequality $\sum_{n=0}^\infty t^n/(n+1)<\sum_{n=0}^\infty t^n$ implies that
$-\frac{\log (1-t)}{t}<\frac{1}{1-t}$.
So, we have shown that  $\|C_0f_1\|_\infty=1$ and
\[
1<\|C_tf_1\|_\infty<\frac{1}{1-t},\quad t\in (0,1).
\]
	\end{example}

We now turn to the action of $C_t$ in various Banach spaces. For $t=1$ it was noted above that $C_1$ fails to act in $H^\infty$.

\begin{prop}\label{Cont_H_Infty} For $t\in [0,1)$ the  operator $C_t\colon H^\infty\to H^\infty$ is continuous. Moreover, $\|C_0\|_{H^\infty\to H^\infty}=1$ and $$
	\|C_t\|_{H^\infty\to H^\infty}=-\frac{\log (1-t)}{t},\quad t\in (0,1).
	$$
\end{prop}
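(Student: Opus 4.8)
The plan is to establish continuity and compute the operator norm in two steps: first the upper bound $\|C_t\|_{H^\infty\to H^\infty}\leq -\frac{\log(1-t)}{t}$ (with $\|C_0\|\le 1$), then the matching lower bound by exhibiting a test function attaining it. For the upper bound, fix $f\in H^\infty$ with $\|f\|_\infty\le 1$ and use the integral representation \eqref{eq.formula-int}. For $z\in\D\setminus\{0\}$, parametrizing the segment from $0$ to $z$ by $\xi=sz$, $s\in[0,1]$, I would write
\[
|C_tf(z)|=\left|\int_0^1\frac{f(sz)}{1-tsz}\,ds\right|\leq \int_0^1\frac{|f(sz)|}{|1-tsz|}\,ds\leq \int_0^1\frac{ds}{|1-tsz|}.
\]
The key estimate is then to bound $\int_0^1\frac{ds}{|1-tsz|}$ uniformly over $z\in\D$ by $\int_0^1\frac{ds}{1-ts}=-\frac{\log(1-t)}{t}$, which follows from $|1-tsz|\ge 1-ts|z|\ge 1-ts$ for $|z|\le 1$. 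For $t=0$ the same computation gives $|C_0f(z)|\le 1$, so $\|C_0\|_{H^\infty\to H^\infty}\le 1$. Together with $C_tf(0)=f(0)$ this yields $\|C_tf\|_\infty\le -\frac{\log(1-t)}{t}\|f\|_\infty$ for $t\in(0,1)$, proving continuity and the upper bound.

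For the lower bound, the natural test function is exactly the constant function $f_1\equiv 1$ analyzed in Example \ref{Ex.Main}, where it was shown that $\|C_tf_1\|_\infty=-\frac{\log(1-t)}{t}$ for $t\in(0,1)$ and $\|C_0f_1\|_\infty=1$. Since $\|f_1\|_\infty=1$, this immediately forces $\|C_t\|_{H^\infty\to H^\infty}\geq -\frac{\log(1-t)}{t}$ (and $\ge 1$ for $t=0$), so the two bounds coincide and the claimed equalities follow. This is the reason Example \ref{Ex.Main} was recorded as being ``useful in the sequel.''

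The main subtlety to watch is the uniform-in-$z$ reduction of the modulus integral to the value at $z=1$ along the real axis. The inequality $|1-tsz|\ge 1-ts|z|\ge 1-ts$ is elementary, but I want to confirm that the supremum of $|C_tf(z)|$ over $z\in\D$ is genuinely governed by this real-axis worst case rather than by cancellation effects for complex $z$; the clean bound $\int_0^1\frac{ds}{|1-tsz|}\le\int_0^1\frac{ds}{1-ts}$ settles this, since it dominates the integrand pointwise in $s$. I expect no serious obstacle beyond bookkeeping, as both bounds rest on the single estimate $|1-t\xi|\ge 1-|\xi|$ already used in Proposition \ref{Prop-Cont-H(D)}, now sharpened by retaining the factor $t$. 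One should also note that the bound $-\frac{\log(1-t)}{t}<\frac{1}{1-t}$ established in Example \ref{Ex.Main} shows this operator norm is strictly smaller than the cruder constant $\frac{1}{1-r}$ from the $H(\D)$ estimate, consistent with working in the stronger $H^\infty$-norm.
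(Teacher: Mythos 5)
Your proposal is correct and follows essentially the same route as the paper's proof: the same parametrization $\xi=sz$ with the pointwise estimate $|1-stz|\geq 1-st|z|\geq 1-st$ yielding the upper bound $-\frac{\log(1-t)}{t}$ (and $1$ for $t=0$), and the same test function $f_1\equiv 1$ from Example \ref{Ex.Main} for the matching lower bound. No gaps; the argument is complete as stated.
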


\begin{proof} Let $f\in H^\infty$ be fixed. Then
	\[
	|C_0f(z)|=\left|\frac{1}{z}\int_0^zf(\xi)d\xi\right|\leq \max_{|\xi|\leq |z|} |f(\xi)|\leq \|f\|_\infty.
	\]
	This implies that $\|C_0\|_{H^\infty\to H^\infty}\leq 1$. On the other hand, $C_0f_1=f_1$ and so we can conclude that $\|C_0\|_{H^\infty\to H^\infty}=1$.
	
	Now let $t\in (0,1)$. Then, for the parametrization $\xi:=sz$, for $s\in (0,1)$, it follows from $|1-stz|\geq 1-|stz|\geq 1-st$ that
	\begin{align*}
		|C_tf(z)|&=\left|\frac{1}{z}\int_0^z\frac{f(\xi)}{1-t\xi}d\xi\right|=\left|\int_0^1\frac{f(sz)}{1-stz}ds\right|\leq \max_{|\xi|\leq |z|}|f(\xi)|\int_0^1\frac{ds}{1-st|z|}\\
		&\leq \|f\|_\infty \int_0^1\frac{ds}{1-st}=-\frac{\log(1-t)}{t}\|f\|_\infty.
	\end{align*}
So, $C_t\in \cL(H^\infty)$ with $\|C_t\|_{H^\infty\to H^\infty}\leq -\frac{\log(1-t)}{t}$. But, $\|C_tf_1\|_\infty= -\frac{\log(1-t)}{t}$. Accordingly, $\|C_t\|_{H^\infty\to H^\infty}= -\frac{\log(1-t)}{t}$.
\end{proof}

\begin{prop}\label{Cont_HInfty_v} Let $v$ be a weight function on $[0,1)$. For each $t\in [0,1)$ the  operator $C_t\colon H_v^\infty\to H_v^\infty$ is continuous. Moreover, $\|C_0\|_{H^\infty_v\to H_v^\infty}=1$ and
	$$
	1\leq \|C_t\|_{H^\infty_v\to H^\infty_v}\leq -\frac{\log(1-t)}{t},\quad t\in (0,1).
	$$
\end{prop}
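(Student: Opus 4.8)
The plan is to mimic the proof of Proposition \ref{Cont_H_Infty}, replacing the sup-norm $\|\cdot\|_\infty$ by the weighted norm $\|\cdot\|_{\infty,v}$ and exploiting the monotonicity of $v$. First I would fix $t\in(0,1)$ and $f\in H_v^\infty$ and use the parametrization $\xi:=sz$ with $s\in(0,1)$ to write $C_tf(z)=\int_0^1\frac{f(sz)}{1-stz}\,ds$, exactly as before. Multiplying by $v(z)$ and estimating under the integral sign, the crucial observation is that since $v$ is non-increasing and $|sz|=s|z|\le |z|$, one has $v(sz)\ge v(z)$, hence $v(z)/v(sz)\le 1$. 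Writing $|f(sz)|=\frac{|f(sz)|v(sz)}{v(sz)}\le \frac{\|f\|_{\infty,v}}{v(sz)}$ and using $|1-stz|\ge 1-st|z|\ge 1-st$, I obtain
\[
|C_tf(z)|\,v(z)\le \|f\|_{\infty,v}\int_0^1\frac{v(z)}{v(sz)}\,\frac{ds}{1-st|z|}\le \|f\|_{\infty,v}\int_0^1\frac{ds}{1-st}=-\frac{\log(1-t)}{t}\,\|f\|_{\infty,v}.
\]
Taking the supremum over $z\in\D$ gives $\|C_t\|_{H_v^\infty\to H_v^\infty}\le -\frac{\log(1-t)}{t}$. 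The case $t=0$ is identical but simpler: from $C_0f(z)=\int_0^1 f(sz)\,ds$ the same argument yields $\|C_0f\|_{\infty,v}\le\|f\|_{\infty,v}$, so $\|C_0\|_{H_v^\infty\to H_v^\infty}\le 1$. Since $C_tf\in H(\D)$ by Proposition \ref{Prop-Cont-H(D)} and the bound shows $\|C_tf\|_{\infty,v}<\infty$, the operator $C_t$ indeed maps $H_v^\infty$ into itself continuously.

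For the lower bounds I would test against the constant function $f_1\equiv 1$, which lies in $H_v^\infty$ with $\|f_1\|_{\infty,v}=\sup_{z\in\D}v(z)=v(0)$, the supremum being attained at $z=0$ because $v$ is non-increasing. Recall from Example \ref{Ex.Main} that $C_tf_1(0)=1$ for every $t\in[0,1)$. Consequently
\[
\|C_tf_1\|_{\infty,v}=\sup_{z\in\D}|C_tf_1(z)|\,v(z)\ge |C_tf_1(0)|\,v(0)=v(0)=\|f_1\|_{\infty,v},
\]
so $\|C_t\|_{H_v^\infty\to H_v^\infty}\ge 1$ for all $t\in[0,1)$. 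For $t=0$, since $C_0f_1=f_1$ this is an equality on $f_1$ and, combined with the upper bound $\|C_0\|\le 1$, gives $\|C_0\|_{H_v^\infty\to H_v^\infty}=1$; for $t\in(0,1)$ it yields the asserted lower bound $1$.

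I do not expect any serious obstacle here: the argument is essentially the weighted version of Proposition \ref{Cont_H_Infty}, and the only point requiring care is the monotonicity estimate $v(z)/v(sz)\le 1$, which is exactly what makes the weight disappear from the final bound and reproduces the same constant $-\frac{\log(1-t)}{t}$ as in the unweighted case. The mild subtlety is that, unlike in $H^\infty$, the lower bound here comes not from the full quantity $\|C_tf_1\|_\infty$ but simply from the value of $C_tf_1$ at the origin, where $v$ is largest; this is precisely why for $t\in(0,1)$ one only recovers the estimate $1\le\|C_t\|$ rather than an exact value.
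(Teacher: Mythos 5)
Your proof is correct, and the upper-bound half is essentially the paper's argument: the same parametrization $\xi=sz$, the same monotonicity estimate $v(sz)\geq v(z)$, and the same bound $|1-stz|\geq 1-st|z|$ (the paper keeps $-\frac{\log(1-t|z|)}{t|z|}$ and takes the supremum over $z\in\D$ via Example \ref{Ex.Main}, whereas you bound $1-st|z|\geq 1-st$ under the integral first; this is a trivial variant). Where you genuinely diverge is the lower bound $\|C_t\|_{H^\infty_v\to H^\infty_v}\geq 1$ for $t\in(0,1)$. The paper exhibits the explicit fixed point $g_0(z)=\frac{1}{1-tz}\in H^\infty\subseteq H^\infty_v$, verifying by direct integration that $C_tg_0=g_0$, and concludes from $\|g_0\|_{\infty,v}=\|C_tg_0\|_{\infty,v}$. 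You instead use only the interpolation condition $C_tf(0)=f(0)$ together with the fact that $v$ attains its supremum at the origin: testing on $f_1\equiv 1$ gives $\|C_tf_1\|_{\infty,v}\geq v(0)|C_tf_1(0)|=v(0)=\|f_1\|_{\infty,v}$. Both arguments are valid; yours is more elementary and more general (it shows $\|T\|\geq 1$ for \emph{any} operator on $H^\infty_v$ satisfying $Tf(0)=f(0)$, uniformly in $t\in[0,1)$), while the paper's computation buys strictly more information: it identifies $1$ as an eigenvalue of $C_t$ with eigenvector $g_0$, a fact that is reused later (in Propositions \ref{Spectrum} and \ref{Dyn-Hv}, where $\Ker(I-C_t)=\mathrm{span}\{g_0\}$ is needed for the mean ergodicity argument), so the fixed-point calculation is not wasted effort in the larger scheme of the paper.
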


\begin{proof}
	Recall that $C_tf(0):=f(0)$ for each $f\in H(\D)$ and $t\in [0,1]$. Fix $t\in (0,1)$. Given $f\in H^\infty_v$ and $z\in \D\setminus\{0\}$, observe that
	\begin{align*}
		v(z)|C_tf(z)|&=\frac{v(z)}{|z|}\left|\int_0^z\frac{f(\xi)}{1-t\xi}d\xi\right|=v(z)\left|\int_0^1\frac{f(sz)}{1-stz}ds\right|\\
		&\leq v(z)\int_0^1\frac{|f(sz)|}{|1-stz|}ds\leq \int_0^1\frac{v(sz)|f(sz)|}{|1-stz|}ds\\
		&\leq \|f\|_{\infty,v}\int_0^1\frac{ds}{|1-stz|}\leq\|f\|_{\infty,v}\int_0^1\frac{ds}{1-st|z|}\\
		&=-\frac{\log (1-t|z|)}{t|z|}\|f\|_{\infty,v},
	\end{align*}
	where we used that $v(sz)=v(s|z|)\geq v(|z|)=v(z)$, for $s\in (0,1)$, as $v$ is non-increasing on $(0,1)$ and that $|1-stz|\geq 1-st|z|$, for $s\in (0,1)$. According to the calculations in Example \ref{Ex.Main} we can conclude that
	\[
\|C_tf\|_{\infty, v}=\sup_{z\in\D}|C_tf(z)|v(z)\leq \|f\|_{\infty,v}\sup_{z\in\D}\left[-\frac{\log (1-t|z|)}{t|z|}\right]=-\frac{\log(1-t)}{t}\|f\|_{\infty,v}.
\]
	This implies that $C_t\in \cL(H^\infty_v)$ and $\|C_t\|_{H^\infty_v\to H^\infty_v}\leq -\frac{\log(1-t)}{t}$.
	
	For $t=0$ observe that
	\[
	|C_0f(z)|\leq \int_0^1|f(sz)|ds\leq \max_{|\xi|\leq |z|}|f(\xi)|=\frac{1}{v(z)}\max_{|\xi|=|z|}|f(\xi)|v(\xi)\leq \frac{1}{v(z)}\|f\|_{\infty,v},
	\]
	as $v(\xi)=v(z)$ whenever $|\xi|=|z|$ with $\xi\in \D$. This shows that $\|C_0\|_{H^\infty_v\to H_v^\infty}\leq 1$. Since $C_0f_1=f_1$, it follows that actually $\|C_0\|_{H^\infty_v\to H_v^\infty}=1$.
	
	It remains to show that $\|C_t\|_{H^\infty_v\to H^\infty_v}\geq 1$ for $t\in (0,1)$. To this end,  fix $t\in (0,1)$ and consider the function $g_0(z):=\frac{1}{1-tz}=\sum_{n=0}^\infty t^nz^n$, for $z\in \D$. Then $\|g_0\|_\infty=\frac{1}{1-t}$ and so  $g_0\in H^\infty\subseteq H^\infty_v$.  Moreover, for every $z\in \D\setminus\{0\}$, it is the case that
	\[
	C_tg_0(z)=\frac{1}{z}\int_0^z\frac{d\xi}{(1-t\xi)^2}=\frac{1}{z}\left[\frac{1}{t(1-t\xi)}\right]_0^z=\frac{1}{tz}\left[\frac{1}{1-tz}-1\right]=\frac{1}{1-tz}=g_0(z).
	\]
	It follows that  $\|g_0\|_{\infty,v}=\|C_tg_0\|_{\infty,v}\leq \|C_t\|_{H^\infty_v\to H^\infty_v}\|g_0\|_{\infty,v}$ which implies that $\|C_t\|_{H^\infty_v\to H^\infty_v}\geq 1$.
\end{proof}

\begin{corollary}\label{Cont_H0_v} Let $v$ be a weight function on $[0,1)$ satisfying $\lim_{r\to 1^-}v(r)=0$. 
	For each  $t\in [0,1)$ the operator $C_t\colon H^0_v\to H^0_v$ is continuous and satisfies $\|C_t\|_{H^0_v\to H^0_v}=\|C_t\|_{H^\infty_v\to H^\infty_v}$.
\end{corollary}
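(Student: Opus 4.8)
The plan is to reduce everything to two facts, using that $H^0_v$ is a closed subspace of $(H^\infty_v,\|\cdot\|_{\infty,v})$ and that $C_t\in\cL(H^\infty_v)$ with the norm bounds from Proposition \ref{Cont_HInfty_v}. First I would prove the invariance $C_t(H^0_v)\subseteq H^0_v$; granting this, the restriction $C_t\colon H^0_v\to H^0_v$ is automatically continuous and satisfies $\|C_t\|_{H^0_v\to H^0_v}\le \|C_t\|_{H^\infty_v\to H^\infty_v}$, since $H^0_v$ carries the norm induced from $H^\infty_v$. It then remains only to prove the reverse inequality $\|C_t\|_{H^\infty_v\to H^\infty_v}\le \|C_t\|_{H^0_v\to H^0_v}$.

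For the invariance, fix $f\in H^0_v$ and $\rho\in(0,1)$, and set $\eta(\rho):=\sup_{\rho\le|w|<1}v(w)|f(w)|$, so that $\eta(\rho)\to 0$ as $\rho\to 1^-$ because $f\in H^0_v$. Starting from the integral estimate already used in the proof of Proposition \ref{Cont_HInfty_v}, namely $v(z)|C_tf(z)|\le v(z)\int_0^1\frac{|f(sz)|}{1-st|z|}\,ds$ for $z\in\D\setminus\{0\}$, I would split the integral at $s_0:=\rho/|z|$ for those $z$ with $|z|>\rho$. On $[0,s_0]$ one has $|sz|\le\rho$, so $|f(sz)|\le \max_{|w|\le\rho}|f(w)|=:M_\rho$ and this part is bounded by $v(z)M_\rho\int_0^1\frac{ds}{1-st|z|}\le v(z)M_\rho A_t$, where $A_t:=-\frac{\log(1-t)}{t}$ (with $A_0:=1$) is the finite supremum computed in Example \ref{Ex.Main}; crucially this term tends to $0$ as $|z|\to1^-$ precisely because of the hypothesis $\lim_{r\to1^-}v(r)=0$. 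On $[s_0,1]$ one has $|sz|\ge\rho$, and using $v(z)\le v(sz)$ (monotonicity of $v$) gives $v(z)|f(sz)|\le v(sz)|f(sz)|\le\eta(\rho)$, so this part is bounded by $\eta(\rho)A_t$. Hence $\limsup_{|z|\to1^-}v(z)|C_tf(z)|\le A_t\,\eta(\rho)$, and letting $\rho\to1^-$ yields $\lim_{|z|\to1^-}v(z)|C_tf(z)|=0$, i.e. $C_tf\in H^0_v$.

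For the reverse norm inequality I would use dilations. Write $M:=\|C_t\|_{H^0_v\to H^0_v}$ and take any $f\in H^\infty_v$ with $\|f\|_{\infty,v}\le1$. For $0<r<1$ put $f_r(z):=f(rz)$; then $f_r$ is holomorphic on a neighbourhood of $\overline{\D}$ and hence bounded there, so $f_r\in H^0_v$ (again by $\lim_{r\to1^-}v(r)=0$), while monotonicity of $v$ gives $v(z)|f(rz)|\le v(rz)|f(rz)|\le\|f\|_{\infty,v}$, i.e. $\|f_r\|_{\infty,v}\le1$. Consequently $\|C_tf_r\|_{\infty,v}\le M$ for every $r$. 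Since $f_r\to f$ in $H(\D)$ as $r\to1^-$ and $C_t\in\cL(H(\D))$ by Proposition \ref{Prop-Cont-H(D)}, we have $C_tf_r(z)\to C_tf(z)$ for each $z\in\D$, so that $v(z)|C_tf(z)|=\lim_{r\to1^-}v(z)|C_tf_r(z)|\le M$ for every $z\in\D$. Taking the supremum over $z$ gives $\|C_tf\|_{\infty,v}\le M$, whence $\|C_t\|_{H^\infty_v\to H^\infty_v}\le M$ and equality of the two norms follows.

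I expect the main obstacle to be the invariance step: the naive estimate that simply replaces $v(z)$ by the larger $v(sz)$ throughout (as in Proposition \ref{Cont_HInfty_v}) only reproduces the bound $\|f\|_{\infty,v}A_t$ and does not decay, so one must retain the genuine factor $v(z)\to0$ on the inner portion $|sz|\le\rho$ while exploiting the $H^0_v$-decay of $f$ on the outer portion. This is exactly where the hypothesis $\lim_{r\to1^-}v(r)=0$ is indispensable; the remaining steps are routine.
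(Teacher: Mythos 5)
Your proposal is correct, but it takes a genuinely different route from the paper on both halves of the statement. For the invariance $C_t(H^0_v)\subseteq H^0_v$ the paper argues softly: the polynomials (hence $H^\infty$) are dense in $H^0_v$, Proposition \ref{Cont_H_Infty} gives $C_t(H^\infty)\subseteq H^\infty\subseteq H^0_v$, and since $C_t\in\cL(H^\infty_v)$ and $H^0_v$ is closed, $C_t(H^0_v)=C_t(\overline{H^\infty})\subseteq\overline{C_t(H^\infty)}\subseteq H^0_v$. Your quantitative substitute checks out: splitting at $s_0=\rho/|z|$, the inner part is bounded by $v(z)M_\rho A_t$ with $A_t=-\frac{\log(1-t)}{t}$ (using that $x\mapsto-\log(1-x)/x$ is increasing, as in Example \ref{Ex.Main}), which decays because $v(z)\to0$, and the outer part by $\eta(\rho)A_t$ via $v(z)\leq v(sz)$; letting $|z|\to1^-$ and then $\rho\to1^-$ does the job. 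For the norm equality the paper invokes the Bierstedt--Summers biduality: $\lim_{r\to1^-}v(r)=0$ makes $H^\infty_v$ canonically isometric to $(H^0_v)''$, and Lemma \ref{L.Main} identifies the bi-transpose $C_t''$ with $C_t\in\cL(H^\infty_v)$, whence the two operator norms coincide because bi-transposition is isometric. Your dilation argument ($f_r(z):=f(rz)$, with $\|f_r\|_{\infty,v}\leq\|f\|_{\infty,v}$ by monotonicity of $v$, $f_r\in H^\infty\subseteq H^0_v$ thanks to the vanishing of $v$, and $C_tf_r\to C_tf$ pointwise via Proposition \ref{Prop-Cont-H(D)}) reaches the same conclusion without any predual machinery. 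What each buys: your route is elementary and self-contained, needing only Propositions \ref{Prop-Cont-H(D)} and \ref{Cont_HInfty_v}; the paper's route is shorter given Lemma \ref{L.Main}, which it must establish anyway because the structural identity $C_t''=C_t$ is of independent use later. It is worth noting that your dilates play exactly the role that the norm-bounded Ces\`aro means of the Taylor polynomials play in the proof of Lemma \ref{L.Main} (approximants in $H^0_v$ of norm at most $\|f\|_{\infty,v}$ converging to $f$ in $\tau_c$), so at heart the two norm arguments exploit the same approximation phenomenon, yours in a more direct, pointwise form.
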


\begin{proof} By Proposition \ref{Cont_HInfty_v} and the fact that $H^0_v$ is a closed subspace of $H^\infty_v$,
	to obtain the result  it suffices to establish that $C_t(H_v^0)\subseteq H^0_v$. To this effect,  observe that $H^\infty\subseteq H^0_v$ and that $H^\infty$ is dense in $H^0_v$, as the space of polynomials is dense in $H^0_v$; see Section 1 of \cite{BR} and also \cite{BBG}.  Proposition \ref{Cont_H_Infty} implies that $C_t(H^\infty)\subseteq H^\infty\subseteq H^0_v$. Since $C_t$ acts continuously on $H^\infty_v$, it follows that
	\[
	C_t(H^0_v)=C_t(\overline{H^\infty})\subseteq \overline{C_t(H^\infty)}\subseteq H^0_v.
	\]
	Moreover, $\lim_{r\to 1^-}v(r)=0$ implies that $H^\infty_v$ is canonically isometric to the bidual of $H^0_v$, \cite[Example 2.1]{BS}, and that the bi-transpose $C_t''\colon H^\infty_v\to H_v^\infty$ of $C_t\colon H^0_v\to H^0_v$ coincides with $C_t\colon H^\infty_v\to H^\infty_v$ (see Lemma \ref{L.Main} below), from which the identity $\|C_t\|_{H^0_v\to H^0_v}=\|C_t\|_{H^\infty_v\to H^\infty_v}$  follows.
\end{proof}

\begin{lemma}\label{L.Main} Let $v$ be a weight function on $[0,1)$ satisfying $\lim_{r\to 1^-}v(r)=0$. For each $t\in [0,1)$, the bi-transpose $C''_t\colon H^\infty_v\to H^\infty_v$ of $C_t\colon H^0_v\to H_v^0$ coincides with $C_t\colon H^\infty_v\to H^\infty_v$.
	\end{lemma}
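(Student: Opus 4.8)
The plan is to combine two general facts about bitranspose operators with the specific structure of these weighted spaces. Recall that for any Banach space $X$ and $T\in\cL(X)$, the canonical embedding $J_X\colon X\to X''$ satisfies $T''J_X=J_XT$, and $T''$ is always continuous for the weak* topology $\sigma(X'',X')$. I would apply this with $X=H^0_v$ and $T=C_t\in\cL(H^0_v)$ (continuous by Corollary \ref{Cont_H0_v}). Under the isometric identification $(H^0_v)''=H^\infty_v$ from \cite[Example 2.1]{BS}, valid because $\lim_{r\to1^-}v(r)=0$, the embedding $J_{H^0_v}$ is precisely the inclusion $H^0_v\su H^\infty_v$, and the evaluation of $g\in H^\infty_v=(H^0_v)''$ at a point-evaluation functional $\delta_z\in(H^0_v)'$ returns $g(z)$. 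The identity $T''J_X=J_XT$ then reads $C''_tf=C_tf$ for every $f\in H^0_v$, so the two operators already agree on the norm-closed subspace $H^0_v$.

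To pass from $H^0_v$ to all of $H^\infty_v$, I would use weak* density together with weak* continuity. Fix $f\in H^\infty_v$ with $\|f\|_{\infty,v}\leq1$. By Goldstine's theorem there is a net $(f_\alpha)\su H^0_v$ with $\|f_\alpha\|_{\infty,v}\leq1$ and $f_\alpha\to f$ in $\sigma(H^\infty_v,(H^0_v)')$. Since each $\delta_z\in(H^0_v)'$, this gives $f_\alpha(z)\to f(z)$ for every $z\in\D$, i.e. $f_\alpha\to f$ pointwise on $\D$. The crux of the argument, and the step I expect to be the main obstacle, is upgrading this pointwise convergence to convergence in $\tau_c$. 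Here I would invoke the continuous inclusion $H^\infty_v\su H(\D)$ established in Section 1: the bounded net $(f_\alpha)$ is bounded, hence relatively compact, in the Fr\'echet--Montel space $(H(\D),\tau_c)$, and every $\tau_c$-convergent subnet must have $f$ as its limit (its limit agrees pointwise with $f$, and pointwise limits are unique). By the standard fact that a net in a compact set all of whose convergent subnets share one limit converges to that limit, we obtain $f_\alpha\to f$ in $\tau_c$.

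With $\tau_c$-convergence in hand the proof closes quickly. By the $\tau_c$-continuity of $C_t$ on $H(\D)$ (Proposition \ref{Prop-Cont-H(D)}), $C_tf_\alpha\to C_tf$ in $\tau_c$, hence $(C_tf_\alpha)(z)\to(C_tf)(z)$ for every $z$. On the other hand $C''_t$ is $\sigma(H^\infty_v,(H^0_v)')$-continuous, so $C''_tf_\alpha\to C''_tf$ weak*, which again yields pointwise convergence $(C''_tf_\alpha)(z)\to(C''_tf)(z)$; and by the first paragraph $C''_tf_\alpha=C_tf_\alpha$ for each $\alpha$. Comparing the two limits gives $(C_tf)(z)=(C''_tf)(z)$ for all $z\in\D$, whence $C_tf=C''_tf$. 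By homogeneity this holds for all $f\in H^\infty_v$, proving that $C''_t=C_t$ on $H^\infty_v$.
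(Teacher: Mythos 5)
Your proof is correct, and it shares the paper's overall skeleton: identify $(H^0_v)''$ with $H^\infty_v$ via \cite[Theorem 1.1 and Example 2.1]{BS}, approximate $f$ in the unit ball $U_v$ by a bounded net from $H^0_v$ converging simultaneously weak* and in $\tau_c$, then compare $C_t''f$ with $C_tf$ pointwise using the $\sigma(H^\infty_v,(H^0_v)')$-continuity of $C_t''$ and the $\tau_c$-continuity of $C_t$ (Proposition \ref{Prop-Cont-H(D)}). The implementation, however, genuinely differs at two points. Where you invoke Goldstine's theorem to get an abstract weak*-convergent net and then upgrade pointwise convergence to $\tau_c$-convergence via Montel compactness of $U_v$ and a subnet argument, the paper proceeds constructively: it takes the Ces\`aro means $f_n$ of the Taylor sections of $f$, which lie in $U_v$ by the norm estimate of \cite[Lemma 1.1]{BBG} and converge to $f$ in $\tau_c$ automatically, and it transfers $\tau_c$-convergence to weak* convergence by proving that $\tau_p$, $\tau_c$ and $\sigma(H^\infty_v,X)$ all coincide on $U_v$ (its Fourth step) --- your compactness upgrade is exactly the Hausdorff-comparison mechanism behind that step, run in the reverse direction. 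Likewise, your appeal to the general identity $T''J_X=J_XT$, giving agreement of $C_t''$ with $C_t$ on all of $H^0_v$ at once, replaces the paper's observation that $(C_t)''f_n=C_tf_n$ because each $f_n$ is a polynomial. Your route buys brevity and softness (no \cite{BBG} estimate, no explicit approximating sequence); the paper's route buys concreteness (sequences rather than nets) and the reusable identity of the three topologies on $U_v$. One caveat: your argument silently leans on two canonicity facts --- that under the \cite{BS} identification the canonical embedding of $H^0_v$ becomes the inclusion into $H^\infty_v$, and that the pairing of $g\in H^\infty_v$ with the point evaluation $\delta_z\in(H^0_v)'$ returns $g(z)$ --- which the paper verifies explicitly in its Third step, so a careful write-up of your version should do the same. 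There is no circularity in your citation of Corollary \ref{Cont_H0_v}, since the invariance $C_t(H^0_v)\subseteq H^0_v$ is established there independently of Lemma \ref{L.Main}.
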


\begin{proof}
	By Proposition \ref{Cont_H_Infty} and Corollary \ref{Cont_H0_v}, together with the fact that   $H^\infty_v$
	is canonically isometric to the bidual of $H^0_v$, both of the operators $C_t''$ and $C_t$ act continuously on $H^\infty_v$.
	
	To show that the bi-transpose $C_t''\colon H^\infty_v\to H^\infty_v$ of $C_t\colon H^0_v\to H^0_v$ coincides with $C_t\colon H^\infty_v\to H^\infty_v$ we proceed via several steps.
	
	\textit{First step}. Given $f\in H(\D)$, its Taylor polynomials $p_k(z)=\sum_{j=0}^k\hat{f}(j)z^j$, $z\in \D$, for $k\in\N_0$, converge to $f$ uniformly on compact subsets of $\D$. That is, $p_k\to f$ in $(H(\D),\tau_c)$ as $k\to\infty$. Accordingly, the averages of $(p_k)_{k\in\N_0}$, that is, the Cesàro means $f_n(z):=\frac{1}{n+1}\sum_{j=0}^np_j(z)$, for $z\in\D$ and $n\in\N_0$, also converge to $f$ in $(H(\D),\tau_c)$ as $n\to\infty$.
	
	\textit{Second step}.  Lemma 1.1 in \cite{BBG} implies, for every $f\in H^\infty_v$ and $n\in\N_0$, that $\|f_n\|_{\infty,v}\leq \|f\|_{\infty,v}$, where $f_n$ is the $n$-th Cesàro mean of $f$, as defined in the \textit{First step}.  Denote by $U_v$ the closed unit ball of $(H^\infty_v,\|\cdot\|_{\infty,v})$. Then, for any given $f\in U_v$, its sequence of   Cesàro means satisfies $(f_n)_{n\in\N_0}\subseteq U_v$ and $f_n\to f$ in $(H(\D),\tau_c)$ as $n\to\infty$.
	
	\textit{Third step}. With the topology of uniform convergence on the compact subsets of $U_v$ denoted by $\tau_c$, let $X:=\{F\in (H_v^\infty)^*:\ F|_{U_v}\ \mbox{is } \tau_c-\mbox{continuous}\}$ be endowed with the norm $\|F\|:=\sup\{|F(f)|:\ f\in U_v\}$. Then \cite[Theorem 1.1(a)]{BS} ensures that $(X,\|\cdot\|)$ is a Banach space and that the evaluation map $\Psi\colon H^\infty_v\to X'$ defined by  $(\Psi(f))(F):=\langle f,F\rangle$, for $F\in X$ and $f\in H^\infty_v$, is an isometric isomorphism onto $X'$ (where $X'$ is the dual  Banach space of $(X,\|\cdot\|)$). Moreover, by \cite[Theorem 1.1(b) and Example 2.1]{BS} the restriction map $R\colon X\to (H^0_v)'$ given by $F\mapsto F|_{H_v^0}$, is also a surjective isometric isomorphism. Therefore, the spaces $H^\infty_v$ and $(H^0_v)''$ are isometrically isomorphic, that is, $X$ and $(H^0_v)'$ are isometrically isomorphic and hence, also $H^\infty_v$ and $(H^0_v)''$ are isometrically isomorphic.
	
	It is easy to see, since the Banach space $X$ above is the predual of $H^\infty_v$, that the evaluation map $\delta_z\in X$, for every $z\in\D$, where $\delta_z\colon f\mapsto f(z)$, for $f\in H^\infty_v$, satisfies $|\langle f,\delta_z\rangle|\leq \|f\|_{\infty,v}/v(z)$. In particular, the linear span $L$ of the set $\{\delta_z:\ z\in\D\}$ separates the points of $H^\infty_v=X'$ and hence, $L$  is dense in $X$. Therefore, the pointwise convergence topology $\tau_p$ on $H^\infty_v$ is Hausdorff and coarser than the $w^*$-topology $\sigma(H^\infty_v, X)$.
	
	\textit{Fourth step}. The closed unit ball $U_v$ of $H^\infty_v$ is a $\tau_c$-compact set  by Montel's theorem, as it is $\tau_c$-bounded and closed. On the other hand, $U_v$ is also $\sigma(H^\infty_v,X)$-compact by the Alaoglu-Bourbaki theorem. Since $\tau_p|_{U_v}$ is coarser than $\tau_c|_{U_v}$ and Hausdorff, we can conclude that  $\tau_p|_{U_v}=\tau_c|_{U_v}$. In the same way, it follows that $\tau_p|_{U_v}=\sigma(H^\infty_v,X)|_{U_v}$. Accordingly, $\tau_p|_{U_v}=\tau_c|_{U_v}=\sigma(H^\infty_v,X)|_{U_v}$.
	
	We are now  ready to prove that $(C_t)''=C_t$. To show this, it suffices to establish that $(C_t)''f=C_tf$ for every $f\in U_v$.
	%%%%%%%

	So, fix $f\in U_v$. With $(f_n)_{n\in\N_0}$ as in the \textit{First step} it follows from there that $f_n\to f$ in $(H(\D),\tau_c)$ as $n\to\infty$ and, by the \textit{Second step}, that $(f_n)_{n\in\N_0}\subseteq U_v$. This implies that $C_tf_n\to C_tf$ in  $(H(\D),\tau_c)$ as $n\to\infty$. Since $C_t\in \cL(H^\infty_v)$ and  $f\in U_v$, it is clear that $C_tf\in H^\infty_v$.
	On the other hand, by the \textit{Fourth step} the sequence $(f_n)_{n\in\N_0}$ also converges to $f$ in $(H^\infty_v,\sigma(H^\infty_v,X))=(H^\infty_v,\sigma(H^\infty_v, (H^0_v)'))$. Since $(C_t)''\colon ((H^0_v)'',\sigma((H^0_v)'',(H^0_v)'))\to ((H^0_v)'',\sigma((H^0_v)'',(H^0_v)')$ is continuous, \cite[\S 8.6]{J}, that is, $(C_t)''\colon (H^\infty_v,\sigma(H^\infty_v, X))\to (H^\infty_v,\sigma(H^\infty_v, X))$ is continuous, it follows that $(C_t)''f_n\to (C_t)''f$ in $(H^\infty_v,\sigma(H^\infty_v, X))$ as $n\to\infty$.
	 Now, $(f_n)_{n\in\N_0}\subset H^\infty\subseteq H^0_v$, as each $f_n$ is a polynomial, and $(C_t)''f_n=C_tf_n$ for every $n\in\N_0$. Moreover, the sequence $C_tf_n\to (C_t)''f$ in $(H(\D),\tau_p)$ as $n\to\infty$. Thus, $(C_t)''f=C_tf$ as desired.
\end{proof}

\begin{prop}\label{Compact} Let $v$ be a weight function satisfying $\lim_{r\to 1^-}v(r)=0$.   For each $t\in [0,1)$, both  of the operators $C_t\colon H^\infty_v\to H^\infty_v$ and $C_t\to H^0_v\to H^0_v$ are  compact.
\end{prop}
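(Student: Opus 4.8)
The plan is to prove compactness directly on $H^\infty_v$ and then transfer it to $H^0_v$. Since $H^\infty_v$ is a Banach space, it suffices to show that every sequence in $C_t(U_v)$ admits a $\|\cdot\|_{\infty,v}$-convergent subsequence, where $U_v$ is the closed unit ball of $H^\infty_v$. So I would fix a sequence $(f_k)_k\su U_v$. As recorded in Section 1 the inclusion $H^\infty_v\su(H(\D),\tau_c)$ is continuous (via $q_r(f)\le\frac{1}{v(r)}\|f\|_{\infty,v}$), so $U_v$ is $\tau_c$-bounded; as $H(\D)$ is Fr\'echet--Montel, $U_v$ is relatively $\tau_c$-compact, and after passing to a subsequence I may assume $f_{k_j}\to f$ in $(H(\D),\tau_c)$. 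The pointwise estimate $v(z)|f(z)|\le 1$ then forces $f\in U_v$. By Proposition \ref{Prop-Cont-H(D)} the operator $C_t$ is $\tau_c$-continuous, so $C_tf_{k_j}\to C_tf$ in $(H(\D),\tau_c)$. The whole problem is to \emph{upgrade} this $\tau_c$-convergence to convergence in $\|\cdot\|_{\infty,v}$; equivalently, writing $h_j:=f_{k_j}-f$, which is bounded in $H^\infty_v$ and satisfies $h_j\to 0$ in $\tau_c$, I must show $\|C_th_j\|_{\infty,v}\to 0$.

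The heart of the argument is a head--tail estimate for the integral representation of $C_t$, and this is where the hypothesis $t<1$ (rather than the decay of $v$) does the real work, through $|1-stz|\ge 1-st|z|\ge 1-t>0$. Fix $\rho\in(0,1)$ and, for $z\in\D\setminus\{0\}$, split (as in the proof of Proposition \ref{Cont_HInfty_v})
\[
C_th_j(z)=\int_0^1\frac{h_j(sz)}{1-stz}\,ds=\int_0^\rho\frac{h_j(sz)}{1-stz}\,ds+\int_\rho^1\frac{h_j(sz)}{1-stz}\,ds.
\]
For the tail I would use $|h_j(sz)|\le \|h_j\|_{\infty,v}/v(s|z|)$ together with the monotonicity of $v$ (so $v(z)\le v(s|z|)$) and $1-st|z|\ge 1-t$ to get, uniformly in $z$,
\[
v(z)\left|\int_\rho^1\frac{h_j(sz)}{1-stz}\,ds\right|\le \|h_j\|_{\infty,v}\int_\rho^1\frac{ds}{1-t}=\frac{1-\rho}{1-t}\,\|h_j\|_{\infty,v}.
\]
For the head I would note that $|sz|\le\rho$ when $s\le\rho$, so $|h_j(sz)|\le q_\rho(h_j)$, and with $v(z)\le v(0)$ and again $1-st|z|\ge 1-t$ this gives $v(z)\,|\int_0^\rho\frac{h_j(sz)}{1-stz}\,ds|\le\frac{v(0)}{1-t}\,q_\rho(h_j)$, also uniformly in $z$.

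Combining the two bounds yields $\|C_th_j\|_{\infty,v}\le \frac{v(0)}{1-t}\,q_\rho(h_j)+\frac{1-\rho}{1-t}\,\|h_j\|_{\infty,v}$. Given $\ve>0$, I would first choose $\rho$ close to $1$ so that the second summand is $<\ve/2$ for all $j$ (using that $(\|h_j\|_{\infty,v})_j$ is bounded), and then let $j\to\infty$: since $h_j\to 0$ in $\tau_c$ one has $q_\rho(h_j)\to 0$, so the first summand is $<\ve/2$ for $j$ large. Hence $\|C_th_j\|_{\infty,v}\to 0$, which proves that $C_t$ is compact on $H^\infty_v$.

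Finally, for $H^0_v$: by Corollary \ref{Cont_H0_v} one has $C_t(H^0_v)\su H^0_v$, and $H^0_v$ is closed in $H^\infty_v$. Running the same argument with $(f_k)\su U_v\cap H^0_v$, each $C_tf_{k_j}$ lies in $H^0_v$ and the subsequence converges in norm, so its limit $C_tf$ lies in $H^0_v$; thus $C_t(U_v\cap H^0_v)$ is relatively compact in $H^0_v$. Alternatively one may invoke Lemma \ref{L.Main}, where $C_t\in\cL(H^\infty_v)$ is identified with the bi-transpose of $C_t\in\cL(H^0_v)$: by Schauder's theorem an operator is compact if and only if its bi-transpose is, so compactness on $H^\infty_v$ transfers to $H^0_v$. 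I expect the main obstacle to be precisely the passage across the boundary $|z|\to 1^-$; the head--tail split isolates it in the tail integral, which is controlled \emph{uniformly} in $z$ exactly because the kernel $1/(1-t\xi)$ stays bounded for $t<1$.
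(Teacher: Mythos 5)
Your proposal is correct and takes essentially the same approach as the paper: your head--tail split of $\int_0^1\frac{h_j(sz)}{1-stz}\,ds$ with the estimates $v(z)\le v(s|z|)$, $|1-stz|\ge 1-t$ and $v(z)\le v(0)$ is exactly the paper's Claim (*), and your Montel-extraction of a $\tau_c$-convergent subsequence together with the closedness of $H^0_v$ and the invariance $C_t(H^0_v)\su H^0_v$ from Corollary \ref{Cont_H0_v} matches the paper's argument step for step. The only cosmetic differences are your normalization $h_j=f_{k_j}-f$ in place of the paper's $\frac{1}{2}(g_j-f)$ and your ``choose $\rho$ first, then $j$'' ordering instead of the paper's explicit choice of $\delta$ in terms of $\ve$.
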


\begin{proof} Fix $t\in [0,1)$. Since $H^0_v$ is a closed subspace of $H_v^\infty$ and $C_t(H^0_v)\subseteq H_v^0$ (cf. Corollary \ref{Cont_H0_v}),
	it suffices to show that $C_t\colon H^\infty_v\to H_v^\infty$ is compact. First we establish the following Claim:
	\begin{itemize}
		\item[(*)] Let the sequence $(f_n)_{n\in\N}\subset H^\infty_v$ satisfy $\|f_n\|_{\infty,v}\leq 1$ for every $n\in\N$ and $f_n\to 0$ in $(H(\D),\tau_{c})$ for $n\to\infty$. Then $C_tf_n\to 0$ in $H^\infty_v$.
	\end{itemize}
To prove the Claim, let $(f_n)_{n\in\N}\subset H^\infty_v$ be a sequence as in (*).
Fix $\varepsilon>0$ and select $\delta\in (0,\beta)$, where $\beta:=\min\{1,\frac{\ve(1-t)}{2},\frac{\ve(1-t)}{2v(0)}\}$. Since $\{\xi\in\C\ \ |\xi|\leq (1-\delta)\}$ is a compact subset of $\D$, there exists $n_0\in\N$ such that
\[
\max_{|\xi|\leq 1-\delta}|f_n(\xi)|<\delta, \quad n\geq n_0.
\]
Recall that $C_tf_n(0)=f_n(0)$ for every $n\in\N$. For $z\in \D\setminus\{0\}$ we have seen previously that
\begin{equation*}
	v(z)|C_tf_n(z)|=v(z)\left|\int_0^1\frac{f_n(sz)}{1-stz}ds\right|\leq v(z)\int_0^{1-\delta}\frac{|f_n(sz)|}{|1-stz|}ds+v(z)\int_{1-\delta}^1\frac{|f_n(sz)|}{|1-stz|}ds.
\end{equation*}
Denote the first (resp., second) summand in the right-side of the previous  inequality by $(A_n)$ (resp., by $(B_n)$).
Using the facts that $|1-stz|\geq 1-st |z|\geq \max\{1-s,1-t,1-|z|\}$, for all $s,t\in [0,1)$ and $z\in \D$, and that $v$ is non-increasing on $[0,1)$ it follows, for every $n\geq n_0$, that $\int_0^{1-\delta}|f_n(sz)|\,ds\leq (1-\delta)\max_{|\xi|\leq (1-\delta)}|f_n(\xi)|$ (as $|sz|\leq (1-\delta)$ for all $s\in [0,1-\delta]$) and hence, that
\[
(A_n)\leq  \frac{v(0)(1-\delta)}{1-t}\max_{|\xi|\leq 1-\delta}|f_n(\xi)|<\frac{\ve}{2}.
\]
On the other hand, for every $n\geq n_0$, we have (as $\|f_n\|_{\infty,v}=\sup_{\xi\in\D}v(\xi)|f_n(\xi)|\leq 1$) that
\[
(B_n)=\int_{1-\delta}^1 \frac{v(z)}{v(sz)}\frac{v(sz)|f_n(sz)|}{|1-stz|}\,ds\leq\int_{1-\delta}^1\frac{\|f_n\|_{\infty,v}}{1-t}\,ds\leq \frac{\delta}{1-t}<\frac{\ve}{2}.
\]
It follows that $\|C_tf_n\|_{\infty,v}<\ve$ for every $n\geq n_0$. That is, $C_tf_n\to 0$ in $H^\infty_v$ for $n\to\infty$ and so (*) is proved.

The compactness of $C_t\in\cL(H^\infty_v)$ can be deduced from (*) as follows. Let $(f_n)_{n\in\N}\subset H^\infty_v$ be any bounded sequence. There is no loss of generality in assuming that $\|f_n\|_{\infty,v}\leq 1$ for all $n\in\N$. To establish the compactness of $C_t\in\cL(H^\infty_v)$  we need to show that $(C_tf_n)_{n\in\N}$ has a convergent subsequence in $H^\infty_v$.

Since $H^\infty_v\su H(\D)$ continuously, the sequence $(f_n)_{n\in\N}$ is also bounded in the Fr\'echet-Montel space $H(\D)$. Hence, there is a subsequence $g_j:=f_{n_j}$, for $j\in\N$, of $(f_n)_{n\in\N}$ and $f\in H(\D)$ such that $g_j\to f$ in $H(\D)$ with respect to $\tau_c$. In particular, $g_j\to f$ pointwise on $\D$. Since $v(z)|g_j(z)|=v(z)|f_{n_j}(z)|\leq 1$ for all $z\in\D$ and  $j\in\N$, letting $j\to \infty$ it follows that $v(z)|f(z)|\leq 1$ for all $z\in\D$, that is, $f\in H^\infty_v$ with $\|f\|_{\infty,v}\leq 1$. Let $h_j:=\frac{1}{2}(g_j-f)$, for $j\in\N$. Then $\|h_j\|_{\infty, v}\leq 1$, for $j\in\N$, and $h_j\to 0$ in $H(\D)$ with respect to $\tau_c$.  Condition (*) implies that $C_th_j\to 0$ in $H^\infty_v$ from which it follows that $C_tf_{n_j}=C_tg_j=C_t(g_j-f)+C_tf=2C_th_j+C_tf\to C_tf $ in $H^\infty_v$, as desired. 
\end{proof}

\begin{prop}\label{Spectrum} Let $v$ be a weight function on $[0,1)$ satisfying $\lim_{r\to 1^-}v(r)=0$. For each $t\in [0,1)$ the  spectra of $C_t\in \cL(H^\infty_v)$ and of $C_t\in \cL(H^0_v)$ are given by  
	\begin{equation}\label{Sp-pt}
		\sigma_{pt}(C_t;H^\infty_v)=\sigma_{pt}(C_t;H^0_v)=\left\{\frac{1}{m+1}\,: n\in\N_0\right\},
	\end{equation}
and 
\begin{equation}\label{Sp}
		\sigma(C_t;H^\infty_v)=\sigma(C_t;H^0_v)=\left\{\frac{1}{m+1}\,: n\in\N_0\right\}\cup\{0\}.
\end{equation}
\end{prop}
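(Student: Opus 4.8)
The plan is to compute the point spectrum first, by solving the eigenvalue equation explicitly as a differential equation, and then to deduce the full spectrum from the compactness of $C_t$ (Proposition \ref{Compact}) together with the Riesz--Schauder theory of compact operators.

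For the point spectrum, fix $\lambda\neq 0$ and look for a nonzero $f\in H(\D)$ with $C_tf=\lambda f$. Since $zC_tf(z)=\int_0^z \frac{f(\xi)}{1-t\xi}\,d\xi$, the eigenvalue equation reads $\lambda z f(z)=\int_0^z \frac{f(\xi)}{1-t\xi}\,d\xi$ for $z\in\D\setminus\{0\}$; differentiating gives the first-order linear ODE $\lambda z f'(z)=\bigl(\frac{1}{1-tz}-\lambda\bigr)f(z)$. Separating variables and using the partial-fraction identity $\frac{1}{z(1-tz)}=\frac1z+\frac{t}{1-tz}$, one integrates to obtain the general nontrivial solution $f(z)=K\,z^{(1-\lambda)/\lambda}(1-tz)^{-1/\lambda}$ on $\D\setminus\{0\}$. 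The factor $(1-tz)^{-1/\lambda}$ is holomorphic and bounded on $\D$ because $|1-tz|\geq 1-t>0$; hence $f$ extends holomorphically across the origin precisely when the exponent $(1-\lambda)/\lambda$ is a nonnegative integer $m$, i.e. when $\lambda=\frac{1}{m+1}$, the corresponding eigenfunction being $f_m(z)=z^m(1-tz)^{-(m+1)}$.

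Each $f_m$ is bounded, so $f_m\in H^\infty\subseteq H^0_v\subseteq H^\infty_v$, which shows that $\frac{1}{m+1}$ is an eigenvalue of $C_t$ on both spaces (the case $m=0$ recovering $C_t g_0=g_0$ from the proof of Proposition \ref{Cont_HInfty_v}). Conversely, any eigenfunction in $H^\infty_v$ or $H^0_v$ lies in $H(\D)$ and is therefore forced into the family $\{f_m\}$ by the argument above, so no other nonzero eigenvalues occur and the two point spectra coincide with $\{\frac{1}{m+1}:m\in\N_0\}$. Finally $\lambda=0$ is not an eigenvalue: $C_tf=0$ forces $\int_0^z \frac{f(\xi)}{1-t\xi}\,d\xi\equiv 0$, hence $f\equiv 0$ upon differentiating, so $C_t$ is injective. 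This establishes \eqref{Sp-pt}.

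For \eqref{Sp}, recall that by Proposition \ref{Compact} the operator $C_t$ is compact on each of the infinite-dimensional Banach spaces $H^\infty_v$ and $H^0_v$. By Riesz--Schauder theory every nonzero point of $\sigma(C_t)$ is an eigenvalue, so $\sigma(C_t)\setminus\{0\}=\sigma_{pt}(C_t)\setminus\{0\}$; moreover $0\in\sigma(C_t)$, since otherwise $C_t$ would be invertible and then $I=C_t^{-1}C_t$ would be compact by Lemma \ref{L-Comp}, which is impossible in infinite dimensions. Combining these facts with \eqref{Sp-pt} yields $\sigma(C_t;H^\infty_v)=\sigma(C_t;H^0_v)=\{\frac{1}{m+1}:m\in\N_0\}\cup\{0\}$. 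I expect the delicate step to be the eigenvalue analysis of the second paragraph---namely, verifying that holomorphy at the origin is exactly what restricts $\lambda$ to the values $\frac{1}{m+1}$ and that the ODE admits no further $H(\D)$-solutions---whereas the passage to the full spectrum is routine once compactness is in hand.
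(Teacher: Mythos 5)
Your proof is correct, but the route to the point spectrum is genuinely different from the paper's. The paper derives \eqref{Sp-pt} by transferring the problem to the discrete operator $C_t^\omega$ on $\omega$ via the Taylor-coefficient embedding $\Phi\colon H(\D)\to\omega$: it cites \cite[Lemma 3.6]{CR4} for $\sigma_{pt}(C_t^\omega;\omega)=\{\frac{1}{m+1}:m\in\N_0\}$ together with the fact that each eigenvector $x^{[m]}$ lies in $\ell^1$, which gives the containment $\sigma_{pt}(C_t;H^0_v)\subseteq\sigma_{pt}(C_t;H^\infty_v)\subseteq\{\frac{1}{m+1}:m\in\N_0\}$, and then uses $x^{[m]}\in\ell^1$ plus $\lim_{r\to1^-}v(r)=0$ to check that $g_m(z)=\sum_n(x^{[m]})_nz^n$ belongs to $H^0_v$. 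You instead solve the eigenvalue equation directly as the ODE $\lambda zf'(z)=\bigl(\frac{1}{1-tz}-\lambda\bigr)f(z)$, obtaining the explicit one-parameter family $Kz^{(1-\lambda)/\lambda}(1-tz)^{-1/\lambda}$ and reading off that single-valuedness and holomorphy at $0$ force $\lambda=\frac{1}{m+1}$ with eigenfunction $f_m(z)=z^m(1-tz)^{-(m+1)}$ (which indeed checks out, and for $m=0$ recovers the paper's $g_0$). Your argument buys self-containedness (no appeal to \cite{CR4}), an explicit formula for the eigenfunctions, and the stronger observation that they lie in $H^\infty$, so membership in $H^0_v$ is immediate from $\lim_{r\to1^-}v(r)=0$ without any $\ell^1$ considerations; it also yields one-dimensionality of the eigenspaces as a byproduct. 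The paper's approach buys brevity given the cited discrete result and keeps the eigenvector data in the form ($\ell^1$ coefficients, $1$-dimensional kernels) that is reused later in Propositions \ref{Dyn-Hv} and \ref{Spectrum-H(D)}. The final step is identical in both: compactness of $C_t$ (Proposition \ref{Compact}) plus Riesz--Schauder theory gives $\sigma(C_t)=\sigma_{pt}(C_t)\cup\{0\}$ on both spaces; your explicit remark that $0\in\sigma(C_t)$ because otherwise $I=C_t^{-1}C_t$ would be compact, and that $0$ is not an eigenvalue since $C_tf=0$ forces $f\equiv0$ upon differentiation, fills in details the paper leaves implicit.
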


\begin{proof} Let $t\in [0,1)$ be fixed.
	By \cite[Lemma 3.6]{CR4} we know that the point spectrum of the  operator $C_t^\omega\in \cL(\omega)$ is given by $\sigma_{pt}(C_t^\omega;\omega)=\{\frac{1}{m+1}\,:\, m\in\N_0\}$ and, for each $m\in\N_0$,  that the corresponding eigenspace $\Ker(\frac{1}{m+1}I-C_t^\omega)$ is 1-dimensional and is generated by an eigenvector $x^{[m]}=(x_n^{[m]})_{n\in\N_0}\in \ell^1$.
	Since $H^0_v\subseteq H^\infty_v\subseteq H(\D)$ with continuous inclusions and $\Phi\colon H(\D)\to  \omega$ (cf. Section 1) is a continuous embedding, this implies that $\sigma_{pt}(C_t;H^0_v)\subseteq \sigma_{pt}(C_t;H^\infty_v)\subseteq \{\frac{1}{m+1}\,:\, m\in\N_0\}$. Indeed, let $f\in H(\D)\setminus\{0\}$ and $\lambda\in \C$ satisfy $C_tf=\lambda f$. Then $\lambda f(z)=\sum_{n=0}^\infty \widehat{(\lambda f)}(n)z^n=\sum_{n=0}^\infty\lambda  \hat{f}(n)z^n$ and, by \eqref{eq.rapp-serie}, we have that $(C_tf)(z)=\sum_{n=0}^\infty (C_t^\omega \hat{f})_nz^n$. It follows that $C_t^\omega \hat{f}=\lambda \hat{f}$ in $\omega$ with $\hat{f}\not=0$ and so $\lambda\in \sigma_{pt}(C_t^\omega;\omega)=\{\frac{1}{m+1}\ :\ m\in\N_0\}$.
	
	To conclude the proof, it remains to show that $\{\frac{1}{m+1}\,:\, m\in\N_0\}\subseteq \sigma_{pt}(C_t;H^0_v)$. To establish this recall, for each $m\in\N_0$, that the eigenvector $x^{[m]}\in \ell^1$ and hence, the function $g_m(z):=\sum_{n=0}^\infty (x^{[m]})_nz^n$ belongs to $H^0_v$ because $0\leq v(z)|g_m(z)|\leq v(z)\|x^{[m]}\|_{\ell^1}$ for $z\in\D$ and $\lim_{r\to 1^-}v(r)=0$. Moreover, according to \eqref{eq.formula-int} and \eqref{eq.rapp-serie} we have, for each $z\in\D$, that
	\begin{align*}
	C_tg_m(z)&=\sum_{n=0}^{\infty}(C_t^\omega x^{[m]})_nz^n=\sum_{n=0}^{\infty}(\frac{1}{m+1} x^{[m]})_nz^n=\frac{1}{m+1}\sum_{n=0}^\infty (x^{[m]})_nz^n\\&=\frac{1}{m+1}g_m(z).
	\end{align*}
	Thus $g_m$ is an eigenvector of $C_t\in \cL(H^0_v)$ corresponding to the eigenvalue $\frac{1}{m+1}$.
	% in $H_v^0\subseteq H^\infty_v$.
	
	The validity of $\sigma(C_t;H^0_v)=\sigma(C_t;H^\infty_v)=\{\frac{1}{m+1}\,:\, m\in\N_0\}\cup\{0\}$ follows from the fact that $C_t$ is a  compact operator  on both spaces.
\end{proof}

We now investigate the norm of $C_t$ on $H^\infty_v$ for the standard weights $v_\gamma(z):=(1-|z|)^\gamma$, for $\gamma>0$ and $z\in\D$, which satisfy $\lim_{r\to 1^-}v_\gamma(r)=0$.

\begin{prop}\label{Norm-Ct} Let $t\in (0,1)$ and $\gamma>0$. 
	\begin{itemize}
		\item[\rm (i)] The operator norm $\|C_t\|_{H^\infty_{v_\gamma}\to H^\infty_{v_\gamma}}=1$, for every $\gamma\geq 1$.
		\item[\rm (ii)] For each $\gamma\in (0,1)$, the inequality $\|C_t\|_{H^\infty_{v_\gamma}\to H^\infty_{v_\gamma}}\leq \min\{-\frac{\log (1-t)}{t},\frac{1}{\gamma}\}$ is valid.
	\end{itemize}
	\end{prop}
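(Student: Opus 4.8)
The plan is to sharpen the estimate already carried out in the proof of Proposition \ref{Cont_HInfty_v}, specializing to the concrete weight $v_\gamma(z)=(1-|z|)^\gamma$. First I would start from the integral representation established there, namely $C_t f(z)=\int_0^1\frac{f(sz)}{1-stz}\,ds$ for $z\in\D\setminus\{0\}$, and estimate the weighted modulus $v_\gamma(z)|C_tf(z)|$ directly. Writing $r:=|z|$, I would bound $|f(sz)|\leq\|f\|_{\infty,v_\gamma}(1-sr)^{-\gamma}$ from the definition of $\|\cdot\|_{\infty,v_\gamma}$, and bound the denominator by $|1-stz|\geq 1-st|z|\geq 1-sr$, where the final inequality uses $t<1$. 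This crucial step collapses the two factors of the denominator into a single power and yields
\[
v_\gamma(z)|C_tf(z)|\leq\|f\|_{\infty,v_\gamma}\,(1-r)^\gamma\int_0^1\frac{ds}{(1-sr)^{\gamma+1}}.
\]

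Next I would evaluate the elementary integral $\int_0^1(1-sr)^{-(\gamma+1)}\,ds=\frac{(1-r)^{-\gamma}-1}{\gamma r}$ via the substitution $w=1-sr$, so that the whole right-hand side simplifies to $\|f\|_{\infty,v_\gamma}\cdot h_\gamma(r)$, where $h_\gamma(r):=\frac{1-(1-r)^\gamma}{\gamma r}$. The case $z=0$ is immediate, since $v_\gamma(0)=1$ and $C_tf(0)=f(0)$. Thus the operator norm is controlled by $\sup_{0<r<1}h_\gamma(r)$, and the whole problem reduces to two elementary one-variable estimates.

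For (i), when $\gamma\geq 1$, I would invoke Bernoulli's inequality $(1-r)^\gamma\geq 1-\gamma r$ (valid for exponent $\geq 1$ and $-r\geq-1$) to get $h_\gamma(r)\leq 1$, whence $\|C_t\|_{H^\infty_{v_\gamma}\to H^\infty_{v_\gamma}}\leq 1$; combined with the lower bound $\|C_t\|\geq 1$ already recorded in Proposition \ref{Cont_HInfty_v}, this forces equality. For (ii), when $\gamma\in(0,1)$, the power monotonicity $a^\gamma\geq a$ for $a=1-r\in(0,1)$ gives $1-(1-r)^\gamma\leq r$ and hence $h_\gamma(r)\leq\frac1\gamma$, so $\|C_t\|\leq\frac1\gamma$; combining this with the bound $\|C_t\|\leq-\frac{\log(1-t)}{t}$ from Proposition \ref{Cont_HInfty_v} produces the claimed minimum.

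I do not expect a serious obstacle here: the argument is a refinement of Proposition \ref{Cont_HInfty_v}, and the only new analytic ingredient is replacing the bound $|1-stz|^{-1}\leq(1-st|z|)^{-1}$ used there by the slightly lossier but explicitly integrable $(1-sr)^{-1}$. The one point demanding care is selecting the correct elementary inequality for each range of $\gamma$ — Bernoulli for $\gamma\geq 1$ and the monotonicity $(1-r)^\gamma\geq 1-r$ for $\gamma<1$ — so that $\sup_r h_\gamma(r)$ lands precisely on the values $1$ and $1/\gamma$, respectively.
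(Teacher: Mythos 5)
Your proof is correct and follows essentially the same route as the paper: the same chain of estimates $|1-stz|\geq 1-st|z|\geq 1-s|z|$, the same evaluation of $\int_0^1(1-s|z|)^{-(\gamma+1)}\,ds$ leading to the bound $\frac{1-(1-|z|)^\gamma}{\gamma|z|}$, and the same use of the lower bound $\|C_t\|\geq 1$ and the bound $-\frac{\log(1-t)}{t}$ from Proposition \ref{Cont_HInfty_v}. The only difference is cosmetic: where the paper cites page 101 of \cite{ABR-R} for the elementary facts $\sup_r h_\gamma(r)\leq 1$ ($\gamma\geq 1$) and $\sup_r\bigl(1-(1-r)^\gamma\bigr)/r\leq 1$ ($\gamma\in(0,1)$), you prove them inline via Bernoulli's inequality and power monotonicity, which makes the argument self-contained.
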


\begin{proof} We adapt  the arguments given for the Cesàro operator $C_1$ in the proof of \cite[Theorem 2.3]{ABR-R}.
	
	Let $\gamma>0$ and $t\in (0,1)$ be fixed. For $f\in H^\infty_{v_\gamma}$ with $\|f\|_{\infty,v_\gamma}=1$ we have
	\begin{align*}
		|C_tf(z)|&=\frac{1}{|z|}\left|\int_0^1\frac{f(sz)}{1-stz}ds\right|\leq \int_0^1\frac{|f(sz)|}{1-st|z|}ds\\
		&\leq \int_0^1\frac{|f(sz)|}{1-s|z|}ds\leq \int_0^1\frac{ds}{(1-s|z|)^{\gamma+1}}=\frac{1}{(1-|z|)^\gamma}\frac{1-(1-|z|)^\gamma}{\gamma |z|},
	\end{align*}
as $z\in\D$ implies that $1-st|z|\geq 1-s|z|$, for $s\in (0,1)$. Accordingly,
\[
v_\gamma(z)|C_tf(z)|=(1-|z|)^\gamma |C_tf(z)|\leq \frac{1-(1-|z|)^\gamma}{\gamma |z|},\quad z\not=0,
\]
and hence,
\[
\|C_tf\|_{\infty,v_\gamma}\leq\frac{1}{\gamma} \sup_{z\in\D}\frac{1-(1-|z|)^\gamma}{|z|}.
\]
Define $\phi(s):=\frac{1-(1-s)^\gamma}{s}$ for $s\in (0,1]$ and $\phi(0)=\gamma$, in which case $\phi$ is continuous. So, the previous inequality yields $\|C_tf\|_{\infty,v_\gamma}\leq \frac{M_\gamma}{\gamma}$, for all $\|f\|_{\infty,v_\gamma}\leq 1$, that is, $\|C_t\|_{H^\infty_{v_\gamma}\to H^\infty_{v_\gamma}}\leq \frac{M_\gamma}{\gamma}$, where $M_\gamma:=\sup_{s\in [0,1]}\phi(s)$. Proposition \ref{Cont_HInfty_v} yields that $1\leq \|C_t\|_{H^\infty_{v_\gamma}\to H^\infty_{v_\gamma}}\leq -\frac{\log (1-t)}{t}$ for $t\in (0,1)$. On page 101 of \cite{ABR-R}
it is shown that $\frac{M_\gamma}{\gamma}\leq 1$ whenever $\gamma\geq 1$ and that $M_\gamma\leq 1$ for all $\gamma\in (0,1)$. The proof of both parts (i) and (ii) follows immediately.
	\end{proof}

\begin{remark}\label{R-Norm}\rm  For each $\gamma>0$ let $v_\gamma(z)=(1-|z|)^\gamma$, for  $z\in\D$.  Proposition \ref{Norm-Ct} implies that $\sup_{0\leq t<1}\|C_t\|_{H^\infty_{v_\gamma}\to H^\infty_{v_\gamma}}<\infty$. Moreover, if $\gamma\geq 1$, then $\|C_t^n\|_{H^\infty_{v_\gamma}\to H^\infty_{v_\gamma}}=1$ for every $n\in\N$; see case (i) in the proof of \cite[Theorem 2.3]{ABR-R} together with the fact that $1\in \sigma_{pt}(C_t, H^\infty_{v_\gamma})$ by Proposition \ref{Spectrum}.
	\end{remark}

Let $n\in\N$ be fixed. Consider  the weight $v(z)=(\log \frac{e}{1-|z|})^{-n}$, for  $z\in\D$, which satisfies
 $v(0)=1$ and $\lim_{|z|\to 1^-}v(z)=0$.

The function $f(z):=[\log (1-z)]^n\in H(\D)$ belongs to $H^\infty_v$. Indeed, for each $z\in\D$, we have that
\[
|\log (1-z)|=\left|-\sum_{n=1}^\infty \frac{z^n}{n}\right|\leq \sum_{n=1}^\infty \frac{|z|^n}{n}=-\log (1-|z|)
\]
and hence, that
$|f(z)|=|\log (1-z)|^n\leq (-\log (1-|z|))^n$.
Since $v$ is given by  $v(z)=(1-\log(1-|z|))^{-n}$ and
$\lim_{|z|\to 1^-}\frac{-\log (1-|z|)}{1-\log(1-|z|)}=1$,
it follows that $\|f\|_{\infty,v}<\infty$ and so $f\in H^\infty_v$.
On the other hand,
\[
C_1f(z)=\frac{1}{z}\int_0^z\frac{(\log (1-\xi)^n)}{1-\xi}d\xi=-\frac{1}{(n+1)z}(\log (1-z))^{n+1},\quad z\in\D.
\]
Accordingly, $C_1f\not\in H^\infty_v$ since
\begin{align*}
\lim_{s\to s^-}v(s)|(C_1)f(s)|&=\frac{1}{n+1}\lim_{s\to 1^-}\left|\frac{(\log(1-s))^{n+1}}{s(1-\log(1-s))^n}\right|\\
&=\frac{1}{n+1}\lim_{s\to 1^-}\left|\left(\frac{\log(1-s)}{1-\log(1-s)}\right)^n\frac{\log (1-s)}{s}\right|=\infty.
\end{align*}
This implies that the Cesàro operator $C_1$ is not well-defined on $H^\infty_v$, that is, $C_1(H^\infty_v)\not \subseteq H^\infty_v$. But, by Proposition \ref{Cont_HInfty_v} the generalized Cesàro operator $C_t\in \cL(H^\infty_v)$ for every $t\in [0,1)$. At this point, the following question arises:
\textit{Is $\sup_{t\in [0,1)}\|C_t\|_{H^\infty_v\to H^\infty_v}<\infty$  for this particular $v$?} Our next two results show that the answer is negative for certain weights $v$, which includes $v(z)=\left(\log\frac{e}{1-|z|}\right)^{-n}$ for $z\in\D$. 

\begin{prop}\label{SupFinite} Let $v$ be a weight function on $[0,1)$ such that  $\sup_{t\in [0,1)}\|C_t\|_{H^\infty_v\to H^\infty_v}<\infty$. Then $C_1\in \cL(H^\infty_v)$.
	\end{prop}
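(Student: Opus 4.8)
The plan is to exploit the hypothesis, which furnishes a finite constant $M:=\sup_{t\in[0,1)}\|C_t\|_{H^\infty_v\to H^\infty_v}$, together with the fact that $C_tf$ converges pointwise on $\D$ to $C_1f$ as $t\to 1^-$. First I would verify that, for every $f\in H(\D)$, the function $C_1f$ given by $C_1f(0):=f(0)$ and $C_1f(z):=\frac{1}{z}\int_0^z\frac{f(\xi)}{1-\xi}\,d\xi$ for $z\neq 0$ is well defined and belongs to $H(\D)$. Indeed, $\xi\mapsto f(\xi)/(1-\xi)$ is holomorphic on $\D$, so its primitive $F(z):=\int_0^z f(\xi)/(1-\xi)\,d\xi$ is holomorphic with $F(0)=0$, and $C_1f=F/z$ extends holomorphically across the origin with value $F'(0)=f(0)$. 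In particular $C_1f$ is a genuine element of $H(\D)$ before any norm is computed.

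Second, I would establish the pointwise convergence. Fix $f\in H^\infty_v$ and $z\in\D\setminus\{0\}$; using the parametrization $\xi=sz$ already employed in the proof of Proposition \ref{Cont_HInfty_v}, one writes $C_tf(z)=\int_0^1\frac{f(sz)}{1-stz}\,ds$ and $C_1f(z)=\int_0^1\frac{f(sz)}{1-sz}\,ds$. Since $|1-stz|\geq 1-st|z|\geq 1-|z|>0$ for all $s\in[0,1]$ and $t\in[0,1)$, the integrands are dominated on $[0,1]$ by the $s$-independent integrable constant $(1-|z|)^{-1}\max_{|\xi|\leq|z|}|f(\xi)|$, and they converge pointwise in $s$ to $f(sz)/(1-sz)$ as $t\to 1^-$. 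The dominated convergence theorem then yields $C_tf(z)\to C_1f(z)$ as $t\to 1^-$; at $z=0$ the convergence is trivial because $C_tf(0)=f(0)$ for every $t$.

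Finally I would transfer the uniform weighted estimate to the limit. For each $t\in[0,1)$ and each $z\in\D$ the hypothesis gives $v(z)|C_tf(z)|\leq\|C_tf\|_{\infty,v}\leq M\|f\|_{\infty,v}$. Holding $z$ fixed and letting $t\to 1^-$, the pointwise convergence just established produces $v(z)|C_1f(z)|\leq M\|f\|_{\infty,v}$. Taking the supremum over $z\in\D$ gives $\|C_1f\|_{\infty,v}\leq M\|f\|_{\infty,v}<\infty$, so $C_1f\in H^\infty_v$ and $C_1$ is a bounded linear operator on $H^\infty_v$ with $\|C_1\|_{H^\infty_v\to H^\infty_v}\leq M$; hence $C_1\in\cL(H^\infty_v)$. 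The argument is essentially a stability-under-limits statement, so I do not expect a serious obstacle: the only points demanding genuine care are the holomorphy and well-definedness of $C_1f$ on all of $\D$, and the uniform-in-$s$ domination needed to justify the limit $t\to 1^-$, both of which are controlled by the single elementary bound $|1-stz|\geq 1-|z|$ valid on the compact segment $[0,z]$. Note that, crucially, one takes the pointwise limit in $t$ first and the supremum in $z$ only afterwards, so no interchange of limit and supremum is required.
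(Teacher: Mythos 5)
Your proof is correct, and its main mechanism is exactly the paper's: parametrize $\xi=sz$, use $|1-stz|\geq 1-|z|$ to dominate the integrand, apply dominated convergence to get $C_tf(z)\to C_1f(z)$ pointwise on $\D$, and then let $t\to 1^-$ in the uniform bound $v(z)|C_tf(z)|\leq M\|f\|_{\infty,v}$ at each fixed $z$ before taking the supremum. The one step where you genuinely diverge is the holomorphy of $C_1f$. The paper does not argue this directly: it first uses the hypothesis to show that $\{C_tf\,:\,t\in[0,1)\}$ is bounded in the Fr\'echet--Montel space $H(\D)$ (via $q_r(C_tf)\leq M\|f\|_{\infty,v}/v(r)$), hence relatively compact, and then upgrades pointwise convergence to $\tau_c$-convergence $C_tf\to C_1f$, which in particular forces $C_1f\in H(\D)$. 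You instead observe the elementary fact that $\xi\mapsto f(\xi)/(1-\xi)$ is holomorphic on $\D$ (since $1-\xi\neq 0$ there), so its primitive $F$ is holomorphic with $F(0)=0$, and $C_1f=F(z)/z$ has a removable singularity at the origin with value $f(0)$. This is shorter, avoids Montel's theorem entirely, and shows that $C_1\colon H(\D)\to H(\D)$ is well defined for \emph{every} $f\in H(\D)$, independently of the hypothesis on the norms; what the paper's longer route buys in exchange is the stronger convergence $C_tf\to C_1f$ in $\tau_c$ (not just pointwise), which, however, is not needed for the final weighted estimate. Both arguments correctly take the limit in $t$ at fixed $z$ first and the supremum in $z$ afterwards, which is the only delicate interchange in the problem.
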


\begin{proof} Proposition \ref{Prop-Cont-H(D)} implies that   $\{C_t\,:\, t\in [0,1)\}$ is equicontinuous in $\cL(H(\D))$. The  claim is that $\lim_{t\to 1^-}C_tf(z)=C_1f(z)$, for every $f\in H(\D)$ and $z\in\D$.

To prove this claim fix $f\in H(\D)$ and $z\in\D\setminus\{0\}$. Recall, for $t\in [0,1)$, that
\[
C_tf(z)=\frac{1}{z}\int_0^z\frac{f(\xi)}{1-t\xi}d\xi=\int_0^1\frac{f(sz)}{1-stz}ds
	\]
	and
	\[
	C_1f(z)=\frac{1}{z}\int_0^z\frac{f(\xi)}{1-\xi}d\xi=\int_0^1\frac{f(sz)}{1-sz}ds.
		\]
Moreover, for each $z\in\D\setminus\{0\}$, we have (as $|1-stz|\geq (1-|z|)$) that
	\[
	\left|\frac{f(sz)}{1-st z}\right|\leq \frac{|f(sz)|}{1-|z|}\leq \frac{1}{1-|z|}\max_{|\xi|\leq |z|}|f(\xi)|, \quad s\in [0,1],
	\]
	and that $\lim_{t\to 1^-}\frac{f(sz)}{1-st z}=\frac{f(sz)}{1-s z}$ for every $s\in [0,1]$. So, we can apply the dominated convergence theorem to conclude that $\lim_{t\to 1^-}C_tf(z)=C_1f(z)$ for $z\in \D\setminus\{0\}$.
		For $z=0$ we have $C_tf(0)=f(0)=C_1f(0)$ for each $f\in H(\D)$ and $t\in [0,1)$. So, for each $f\in H(\D)$, we can conclude that $C_tf\to C_1f$ pointwise on $\D$ for $t\to 1^-$. The claim is thereby established.
	
	We now show that $C_tf\to C_1f$ in $H(\D)$ as $t\to 1^-$ for every $f\in H^\infty_v$. The assumption $\sup_{t\in [0,1)}\|C_t\|_{H^\infty_v\to H_v^\infty}<\infty$ implies that there exists $M>0$ satisfying $\|C_t\|_{H^\infty_v\to H^\infty_v}\leq M$ for every $t\in [0,1)$. Therefore,
	\begin{equation}\label{eq.Stima}
	\sup_{z\in\D}|C_tf(z)|v(z)\leq M\|f\|_{\infty,v}, \quad f\in H^\infty_v,\ t\in [0,1).
	\end{equation}

	Fix $f\in H^\infty_v$. Then $\{C_tf\,:\, t\in [0,1)\}$ is a bounded set in $H(\D)$. Indeed, given $r\in (0,1)$ and $t\in [0,1)$ we have (as $v(r)\leq v(z)$ for all $|z|\leq r$) that
	\[
	q_r(C_tf)=\sup_{|z|\leq r}|C_tf(z)|=\max_{|z|=r}|C_tf(z)|\leq \frac{M}{v(r)}\|f\|_{\infty,v}.
	\]
	So, the set $\{C_tf\,:\, t\in [0,1)\}$ is bounded in the Fr\'echet-Montel space $H(\D)$ and hence, it is relatively compact in $H(\D)$. Since $C_tf\to C_1f$ pointwise on $\D$ for $t\to 1^-$, it follows that $C_tf\to C_1f$ with respect to $\tau_{c}$, that is, in the Fr\'echet space $H(\D)$, for $t\to 1^-$. In particular, $C_1f\in H(\D)$.
	
	Since $H^\infty_v\su H(\D)$ and $C_th\to C_1h$ pointwise on $\D$ as $t\to 1^-$, for every $h\in H(\D)$, letting $t\to 1^-$ in \eqref{eq.Stima} it follows that
	\[
	|C_1f(z)|v(z)\leq M\|f\|_{\infty,v},\quad z\in \D,
	\]
	that is, $\|C_1f\|_{\infty,v}\leq M\|f\|_{\infty,v}$. But, $f\in H^\infty_v$ is arbitrary and so $C_1\in \cL(H^\infty_v)$.
\end{proof}

\begin{prop}\label{SupInfty} For each $n\in\N$, let $v(z)=(\log(\frac{e}{1-|z|}))^{-n}$ for $z\in\D$.  Then  $\sup_{t\in [0,1)}\|C_t\|_{H^\infty_v\to H^\infty_v}=\infty$.
\end{prop}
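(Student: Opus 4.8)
The plan is to prove the statement by contradiction, invoking Proposition \ref{SupFinite} in contrapositive form together with the explicit computation carried out in the paragraph immediately preceding it. First I would suppose, to the contrary, that $\sup_{t\in[0,1)}\|C_t\|_{H^\infty_v\to H^\infty_v}<\infty$ for the weight $v(z)=(\log(\frac{e}{1-|z|}))^{-n}$. Proposition \ref{SupFinite} then guarantees that $C_1\in\cL(H^\infty_v)$; in particular this forces the inclusion $C_1(H^\infty_v)\subseteq H^\infty_v$.

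The contradiction comes from the explicit witness already exhibited above. Indeed, the function $f(z):=[\log(1-z)]^n$ was shown to lie in $H^\infty_v$, whereas its image
\[
C_1f(z)=-\frac{1}{(n+1)z}(\log(1-z))^{n+1},\quad z\in\D,
\]
satisfies $\lim_{s\to 1^-}v(s)|C_1f(s)|=\infty$, so that $C_1f\notin H^\infty_v$. This directly contradicts $C_1(H^\infty_v)\subseteq H^\infty_v$. Hence the assumption $\sup_{t\in[0,1)}\|C_t\|_{H^\infty_v\to H^\infty_v}<\infty$ is untenable, and I conclude $\sup_{t\in[0,1)}\|C_t\|_{H^\infty_v\to H^\infty_v}=\infty$, as claimed.

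I expect no genuine obstacle in this argument, since the analytic core---namely that $C_1$ fails to map this particular $H^\infty_v$ into itself---has already been established by the limit computation preceding Proposition \ref{SupFinite}. The only point requiring care is the logical bookkeeping: one must read the conclusion $C_1\in\cL(H^\infty_v)$ of Proposition \ref{SupFinite} as asserting precisely the inclusion $C_1(H^\infty_v)\subseteq H^\infty_v$, which is exactly what the witness $f$ refutes. As an alternative (and more laborious) route one could instead estimate $\|C_t\|_{H^\infty_v\to H^\infty_v}$ from below directly, by testing on a $t$-dependent family of functions---for instance suitable powers of $\log(1-tz)$---and showing that the resulting quotients blow up as $t\to1^-$; however, the contradiction argument above is cleaner and bypasses these calculations entirely.
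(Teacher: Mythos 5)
Your proposal is correct and coincides with the paper's own proof, which consists precisely of citing Proposition \ref{SupFinite} together with the preceding discussion exhibiting the witness $f(z)=[\log(1-z)]^n\in H^\infty_v$ with $C_1f\notin H^\infty_v$. You have merely written out explicitly the contradiction argument that the paper leaves implicit in its one-line proof.
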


\begin{proof}
	Apply Proposition \ref{SupFinite} and the discussion prior it.
\end{proof}

\section{Linear dynamics and Mean Ergodicity of   $C_t$ }

The aim of this section is to investigate the mean ergodicity and the linear dynamics of the operators $C_t$, for $t\in [0,1)$, acting on $H(\D)$, $H^\infty_v$ and $H^0_v$

An operator $T\in \cL(X)$, with $X$ a lcHs, is called \textit{power bounded} if $\{T^n\ :\ n\in\N_0\}$ is an equicontinuous subset of $\cL(X)$. For a Banach space $X$, this means that $\sup_{n\in\N_0}\|T^n\|_{X\to X}<\infty$. Given $T\in \cL(X)$, the averages
\[
T_{[n]}:=\frac{1}{n}\sum_{m=1}^nT^m,\quad n\in\N,
\]
are usually called the Cesàro means of $T$. The operator $T$ is said to be \textit{mean ergodic} (resp., \textit{uniformly mean ergodic}) if $(T_{[n]})_{n\in\N}$ is a convergent sequence in $\cL_s(X)$ (resp., in $\cL_b(X)$). It is routine to check that $\frac{T^n}{n}=T_{[n]}-\frac{n-1}{n}T_{[n-1]}$, for $n\geq 2$, and hence, $\tau_s$-$\lim_{n\to\infty}\frac{T^n}{n}=0$ whenever $T$ is mean ergodic. Every power bounded operator on a Fr\'echet-Montel space $X$ is necessarily uniformly mean ergodic, \cite[Proposition 2.8]{ABR0}. Concerning the linear dynamics of $T\in \cL(X)$, with $X$ a  lcHs, the operator $T$ is called \textit{supercyclic} if, for some $z\in X$, the projective orbit $\{\lambda T^nz\ :\ \lambda\in\C,\ n\in\N_0\}$ is dense in $X$. Since the closure of the linear span of a projective orbit is separable, if such a supercyclic operator $T\in\cL(X)$ exists, then $X$ is necessarily  separable.

 Observe that the space $H^\infty_v$ is never separable, \cite[Theorem 1.1]{Lu}. Therefore, every operator $T\in \cL(H^\infty_v)$ is clearly not supercyclic. However, the spaces $H(\D)$, \cite[Theorem 27.2.5]{23}, and $H^0_v$, \cite[Theorem 1.1]{Lu}, for every weight $v$ are always separable. Hence, the problem of supercyclicity for non-zero operators  $T\in \cL(H(\D))$ and $T\in \cL(H^0_v)$ arises.

The following result, \cite[Theorem 6.4]{ABR-N}, is stated here for Banach spaces.

\begin{theorem}\label{Th-ABR} Let $X$ be a Banach space and let $T\in \cL(X)$ be a compact operator such that $1\in \sigma(T;X)$ with $\sigma(T;X)\setminus \{1\}\subseteq \overline{B(0,\delta)}$ for some $\delta\in (0,1)$ and satisfying  $\Ker (I-T)\cap {\rm Im} (I-T)=\{0\}$. Then $T$ is power bounded and uniformly mean ergodic.
\end{theorem}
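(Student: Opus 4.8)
The plan is to exploit the Riesz--Schauder spectral theory of the compact operator $T$ and to reduce everything to a splitting of $X$ induced by the isolated eigenvalue $1$. Since $T\in\cL(X)$ is compact, $S:=I-T$ is a Fredholm operator of index $0$ (a compact perturbation of the identity, and $1\neq 0$), so $\Ker S$ is finite dimensional, ${\rm Im}\,S$ is closed, and ${\rm codim}\,{\rm Im}\,S=\dim\Ker S$. Moreover $\sigma(T;X)$ is at most countable with $0$ its only possible accumulation point, and the hypothesis $\sigma(T;X)\setminus\{1\}\su\ov{B(0,\delta)}$ with $\delta\in(0,1)$ shows that $1$ is an isolated eigenvalue of $T$ of finite multiplicity.

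First I would produce the decomposition. The assumption $\Ker(I-T)\cap{\rm Im}(I-T)=\{0\}$, together with $\dim\Ker S={\rm codim}\,{\rm Im}\,S=:d$, forces $X=\Ker(I-T)\oplus{\rm Im}(I-T)=:M\oplus N$ as a topological direct sum, because adjoining the $d$-dimensional subspace $\Ker S$ to the closed codimension-$d$ subspace ${\rm Im}\,S$, which it meets trivially, fills up all of $X$. Both $M$ and $N$ are $T$-invariant, and by construction $T|_M=I_M$. The crucial role of the transversality hypothesis is to rule out a nontrivial nilpotent part at the eigenvalue $1$: it gives $\Ker(I-T)^2=\Ker(I-T)$ (if $(I-T)^2x=0$ then $(I-T)x\in\Ker(I-T)\cap{\rm Im}(I-T)=\{0\}$), so the generalized eigenspace of $1$ collapses to $M$ and $T$ acts as the identity there. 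Next I would check that $r(T|_N)\le\delta<1$: the map $(I-T)|_N$ is injective (by the hypothesis) and surjective (since $(I-T)X=N$ and $(I-T)M=\{0\}$), hence invertible on $N$ by the open mapping theorem, so $1\notin\sigma(T|_N;N)$. As $\sigma(T;X)=\sigma(T|_M;M)\cup\sigma(T|_N;N)=\{1\}\cup\sigma(T|_N;N)$, we get $\sigma(T|_N;N)=\sigma(T;X)\setminus\{1\}\su\ov{B(0,\delta)}$, and therefore, by the spectral radius formula, $\|(T|_N)^n\|\le C\rho^n$ for some constants $C>0$, $\rho\in(\delta,1)$ and all $n\in\N$.

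Finally I would read off both conclusions from the splitting. Writing $P$ for the bounded projection of $X$ onto $M$ along $N$, the $T$-invariance of $M,N$ together with $T|_M=I_M$ give $T^n=P+(T|_N)^n(I-P)$ for every $n$, so $\|T^n\|\le\|P\|+C\rho^n\|I-P\|$ stays bounded in $n$ and $T$ is power bounded. For ergodicity the same identity yields
\[
T_{[n]}=\frac1n\sum_{m=1}^nT^m=P+\Big(\frac1n\sum_{m=1}^n(T|_N)^m\Big)(I-P),
\]
and since $\big\|\frac1n\sum_{m=1}^n(T|_N)^m\big\|\le\frac{C}{n}\sum_{m=1}^n\rho^m\le\frac{C\rho}{n(1-\rho)}\to0$, the averages $T_{[n]}$ converge to $P$ in operator norm, i.e.\ in $\cL_b(X)$; hence $T$ is uniformly mean ergodic. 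I expect the main obstacle to be the second paragraph: correctly extracting the topological direct sum from the index-zero Fredholm property and verifying that the transversality hypothesis is precisely what collapses the generalized eigenspace at $1$ to $\Ker(I-T)$ (equivalently, that $1$ is a semisimple eigenvalue, a first-order pole of the resolvent), since without it $T|_M$ would carry a nonzero nilpotent and $\|T^n\|$ would grow.
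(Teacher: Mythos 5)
Your proof is correct and complete. A point of comparison is somewhat moot here: the paper does not prove this theorem at all --- it imports it verbatim from \cite[Theorem 6.4]{ABR-N}, where it is established in a more general locally convex setting --- so your blind argument is in effect a reconstruction, for Banach spaces, of a proof the paper omits. Your route is the natural one: Riesz--Schauder theory makes $I-T$ Fredholm of index zero with closed range; the transversality hypothesis $\Ker(I-T)\cap {\rm Im}(I-T)=\{0\}$, combined with the index count $\dim\Ker(I-T)={\rm codim}\,{\rm Im}(I-T)$, upgrades this to the topological splitting $X=\Ker(I-T)\oplus{\rm Im}(I-T)$ into $T$-invariant subspaces (the projection onto a finite-dimensional summand along a closed complement is automatically bounded, as you use); $(I-T)|_N$ is bijective, hence $\sigma(T|_N;N)=\sigma(T;X)\setminus\{1\}\subseteq\overline{B(0,\delta)}$, and Gelfand's formula gives $\|(T|_N)^n\|\leq C\rho^n$ with $\rho\in(\delta,1)$; the identity $T^n=P+(T|_N)^n(I-P)$ then delivers both conclusions at once --- indeed you prove slightly more than is claimed, namely that $T^n\to P$ and $T_{[n]}\to P$ in the operator norm. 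The one implicit step, that $M=\Ker(I-T)\neq\{0\}$ so that $\sigma(T|_M;M)=\{1\}$, is covered by your earlier observation that $1\in\sigma(T;X)$ is a (necessarily isolated) eigenvalue of the compact operator $T$, by the Fredholm alternative. The main alternative route would have been the Riesz spectral projection at the isolated point $1$ via the holomorphic functional calculus, using the transversality hypothesis to show that $1$ is a first-order pole of the resolvent (equivalently, that the nilpotent part vanishes, which is exactly your observation $\Ker(I-T)^2=\Ker(I-T)$); your Fredholm-theoretic construction of the projection avoids the functional calculus entirely and is, if anything, more elementary.
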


A consequence of the previous theorem is the following result.

\begin{prop}\label{Dyn-Hv} Let $v$ be a weight function on $[0,1)$ satisfying $\lim_{r\to 1^-}v(r)=0$. For each $t\in [0,1)$ both of  the operators $C_t\in \cL(H^\infty_v)$ and  $C_t\in \cL(H^0_v)$ are power bounded, uniformly mean ergodic and fail to be supercyclic.
\end{prop}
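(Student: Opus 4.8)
The plan is to read off all three conclusions from the spectral and compactness data already assembled, treating the two Banach spaces in parallel wherever possible. Throughout, let $X$ denote either $H^\infty_v$ or $H^0_v$. For power boundedness and uniform mean ergodicity I would apply Theorem \ref{Th-ABR} to $T:=C_t$ on $X$. Three of its four hypotheses are immediate from earlier results: $C_t$ is compact on both spaces by Proposition \ref{Compact}; and by Proposition \ref{Spectrum} we have $\sigma(C_t;X)=\{\frac{1}{m+1}:m\in\N_0\}\cup\{0\}$, so that $1\in\sigma(C_t;X)$ (take $m=0$) and
\[
\sigma(C_t;X)\setminus\{1\}=\Big\{\tfrac{1}{m+1}:m\geq 1\Big\}\cup\{0\}\su\ov{B(0,\tfrac12)},
\]
i.e. the containment hypothesis holds with $\delta=\tfrac12\in(0,1)$.

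The only condition of Theorem \ref{Th-ABR} requiring a genuine argument is $\Ker(I-C_t)\cap {\rm Im}(I-C_t)=\{0\}$, and I expect this to be the main point of the proof. Here I would exploit the normalization built into the definition of $C_t$: since $C_tf(0)=f(0)$ for every $f\in H(\D)$, each element of the range of $I-C_t$ vanishes at the origin, so ${\rm Im}(I-C_t)\su\{f:f(0)=0\}$. On the other hand, the eigenvalue $1=\frac{1}{0+1}$ has a one-dimensional eigenspace (Proposition \ref{Spectrum} together with \cite[Lemma 3.6]{CR4}), spanned by $g_0(z):=\frac{1}{1-tz}$; indeed the identity $C_tg_0=g_0$ is precisely the computation carried out in the proof of Proposition \ref{Cont_HInfty_v}, and $g_0(0)=1\neq 0$. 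Consequently a nonzero element $c\,g_0$ of $\Ker(I-C_t)$ takes the value $c\neq 0$ at $0$ and therefore cannot lie in ${\rm Im}(I-C_t)$, forcing the intersection to be trivial. Theorem \ref{Th-ABR} then yields that $C_t$ is power bounded and uniformly mean ergodic on both $H^\infty_v$ and $H^0_v$.

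For non-supercyclicity the two spaces are handled differently. On $H^\infty_v$ there is nothing to prove: $H^\infty_v$ is non-separable by \cite[Theorem 1.1]{Lu}, whereas a supercyclic operator forces its underlying space to be separable (as already noted in Section 1, the closure of the linear span of a projective orbit is separable); hence no operator on $H^\infty_v$ can be supercyclic. On the separable space $H^0_v$ I would instead invoke the standard spectral obstruction from linear dynamics: if $C_t\in\cL(H^0_v)$ were supercyclic, then its transpose $C_t'$ on $(H^0_v)'$ could have at most one eigenvalue (see \cite{B-M}, \cite{G-P}). But $C_t$ is compact with the infinitely many nonzero eigenvalues $\frac{1}{m+1}$, and by the Riesz--Schauder theory the nonzero eigenvalues of a compact operator coincide with those of its transpose; thus $C_t'$ possesses infinitely many eigenvalues, a contradiction. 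Hence $C_t$ fails to be supercyclic on $H^0_v$ as well. Apart from the kernel--image verification above, which the evaluation-at-$0$ normalization of $C_t$ settles cleanly, the remaining steps are bookkeeping built on Propositions \ref{Compact} and \ref{Spectrum} and the cited dynamical criteria.
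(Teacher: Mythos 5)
Your proposal is correct and follows essentially the same route as the paper: Theorem \ref{Th-ABR} fed by Propositions \ref{Compact} and \ref{Spectrum} with $\delta=\tfrac12$, the kernel--image condition settled via $C_tf(0)=f(0)$ and the eigenvector $g_0(z)=\frac{1}{1-tz}$ with $g_0(0)=1$, non-separability of $H^\infty_v$ for one space, and the compactness-plus-point-spectrum obstruction of \cite[Proposition 1.26]{B-M} on $H^0_v$. The only (harmless) deviation is that the paper additionally identifies ${\rm Im}(I-C_t)$ exactly as the hyperplane $\{g:g(0)=0\}$ via the codimension formula of \cite[Theorem 9.10.1]{Ed}, whereas you correctly observe that the mere containment ${\rm Im}(I-C_t)\su\{g:g(0)=0\}$ already suffices for the trivial-intersection hypothesis.
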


\begin{proof} Fix $t\in [0,1)$. It was already noted that $C_t\in \cL(H^\infty_v)$ cannot be supercyclic. The operator $C_t$ is  compact operator on  both  $H^\infty_v$ and on $H^0_v$ (cf. Proposition \ref{Compact}). Therefore, the compact transpose operators $C_t'\in \cL((H^\infty_v)')$ and $C'_t\in \cL((H^0_v)')$ have the same non-zero eigenvalues as $C_t$ (see, e.g., \cite[Theorem 9.10-2(2)]{Ed}). In view of Proposition \ref{Spectrum} it follows that $\sigma_{pt}(C_t';(H^\infty_v)')=\sigma_{pt}(C_t';(H^0_v)')=\{\frac{1}{m+1}\,:\, m\in\N_0\}$. We can apply \cite[Proposition 1.26]{B-M} to conclude that $C_t$ is not supercyclic on  $H^0_v$.
	
	By Proposition \ref{Spectrum} and its proof (as $x^{[0]}=(t^n)_{n\in\N_0}$) we have that $\Ker (I-C_t)={\rm span}\{g_0\}$, with $g_0(z)=\sum_{n=0}^\infty t^nz^n$, for $z\in\D$. On the other hand, ${\rm Im}(I-C_t)$ is a closed subspace of $H^\infty_v$ (resp., of $H^0_v$), as $C_t$ is compact in $H^\infty_v$ (resp., in $H^0_v$)), and  ${\rm Im}(I-C_t)\subseteq \{g\in H^\infty_v\,:\, g(0)=0\}$ (resp., $\subseteq \{g\in H^0_v\,:\, g(0)=0\}$), because $C_tf(0)=f(0)$ for each $f\in H^\infty_v$ (resp., each $f\in H^0_v$). Moreover,  \cite[Theorem 9.10.1]{Ed} implies that ${\rm codim}\,{\rm Im}(I-C_t)={\rm dim}\Ker (I-C_t)=1$. Accordingly, both ${\rm Im}(I-C_t)$ and $\{g\in H^\infty_v\, :\, g(0)=0\}=\Ker (\delta_0)$ are hyperplanes, where $\delta_0\in (H^\infty_v)'$ is the linear evaluation functional $f\mapsto f(0)$, for $f\in H^\infty_v$. It follows that necessarily ${\rm Im}(I-C_t)=\{g\in H^\infty_v\, :\, g(0)=0\}$.
	
	Let $h\in {\rm Im}(I-C_t)\cap\Ker (I-C_t)$. Then $h(0)=0$ and there exists $\lambda\in \C$ such that $h=\lambda g_0$. This yields that $0=h(0)=\lambda g_0(0)=\lambda$. Hence, $h=0$. So, ${\rm Im}(I-C_t)\cap\Ker (I-C_t)=\{0\}$.
	
	 Proposition \ref{Spectrum} implies that $1\in \sigma(C_t; H^\infty_v)=\sigma(C_t; H^0_v)=\{\frac{1}{m+1}\,;\, m\in\N_0\}\cup\{0\}$. Consequently, for $\delta=\frac{1}{2}$, all the assumptions of Theorem \ref{Th-ABR} are satisfied. So, we can conclude that $C_t$ is power bounded and uniformly mean ergodic on  both $H^\infty_v$ and on $H^0_v$.
\end{proof}

In contrast to the compactness of $C_t$ acting in the Banach spaces  $H^\infty_v$ and $H^0_v$ (cf. Proposition \ref{Compact}) the situation for the Fr\'echet space $H(\D)$ is different.

\begin{prop}\label{NonCompact} For each $t\in [0,1)$ the  operator $C_t\colon H(\D)\to H(\D)$ is an isomorphism and, hence, it is not compact.
	\end{prop}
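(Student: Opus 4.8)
The plan is to show that $C_t$ is a bijection of $H(\D)$ onto itself whose inverse is again continuous; non-compactness will then follow formally. Since $C_t$ is already continuous by Proposition \ref{Prop-Cont-H(D)}, it suffices to exhibit an explicit continuous two-sided inverse $B_t\in\cL(H(\D))$.

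To guess $B_t$, I would start from the integral representation \eqref{eq.formula-int}. Writing $g:=C_tf$, we have $zg(z)=\int_0^z \frac{f(\xi)}{1-t\xi}\,d\xi$ for $z\in\D$, and differentiating gives $(zg(z))'=\frac{f(z)}{1-tz}$, i.e. $g(z)+zg'(z)=\frac{f(z)}{1-tz}$. Solving for $f$ suggests defining
\[
B_tg(z):=(1-tz)\bigl(g(z)+zg'(z)\bigr),\quad z\in\D,
\]
which clearly lies in $H(\D)$ for every $g\in H(\D)$. The computation above already shows $B_t(C_tf)=f$. For the reverse composition, set $h:=B_tg=(1-tz)(zg(z))'$; then for $z\neq 0$,
\[
C_th(z)=\frac{1}{z}\int_0^z\frac{h(\xi)}{1-t\xi}\,d\xi=\frac{1}{z}\int_0^z (\xi g(\xi))'\,d\xi=\frac{1}{z}\bigl[\xi g(\xi)\bigr]_0^z=g(z),
\]
and the values at $z=0$ match as well (both sides equal $g(0)$). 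Hence $C_tB_t=B_tC_t=I$.

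It remains to check that $B_t$ is continuous, which is the step requiring the most care since it involves controlling $g'$. Fixing $0<r<\rho<1$, the Cauchy estimate for $g'$ on the circle $|\zeta|=\rho$ yields $|g'(z)|\le \frac{\rho}{(\rho-r)^2}q_\rho(g)$ for $|z|\le r$, while $|1-tz|\le 1+r<2$ and $|g(z)|\le q_\rho(g)$. Combining these gives $q_r(B_tg)\le C(r,\rho)\,q_\rho(g)$ for a constant independent of $g$, so $B_t\in\cL(H(\D))$. Thus $C_t$ is a topological isomorphism of $H(\D)$.

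Finally, non-compactness is immediate. If $C_t$ were compact, then by Lemma \ref{L-Comp} the composition $I=B_tC_t$ would be compact; but a relatively compact neighbourhood of $0$ forces $H(\D)$ to be normable (Kolmogorov's criterion) and then finite-dimensional (Riesz's lemma), contradicting $\dim H(\D)=\infty$. Hence $C_t$ is not compact. The genuine content of the argument is the identification of the inverse $B_t$ and the Cauchy-estimate bound for its continuity; the two-sided inverse identities and the non-compactness conclusion are then routine.
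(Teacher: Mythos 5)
Your proof is correct and rests on the same central idea as the paper's: the inverse you construct, $B_tg(z)=(1-tz)\bigl(g(z)+zg'(z)\bigr)=(1-tz)(zg(z))'$, is exactly the operator $T_t$ in the paper's proof, and the two-sided inverse identities $C_tB_t=B_tC_t=I$ are verified by the same differentiation/integration computations. Where you genuinely diverge is the continuity of the inverse, which you rightly flag as the step needing care: the paper argues that the graph of $T_t$ is closed (using continuity of differentiation and of multiplication operators on $H(\D)$) and then invokes the closed graph theorem for Fr\'echet spaces, whereas you prove the quantitative bound $q_r(B_tg)\le C(r,\rho)\,q_\rho(g)$ directly from the Cauchy estimate $|g'(z)|\le \rho(\rho-r)^{-2}q_\rho(g)$ for $|z|\le r<\rho$. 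Your route is more elementary and self-contained; indeed, since differentiation is continuous on $H(\D)$ (which is proved by precisely the Cauchy estimate you use), the paper's closed-graph detour is not strictly necessary, so your direct estimate is arguably cleaner. You also spell out the final step that the paper leaves as ``in particular'': if $C_t$ were compact then $I=B_tC_t$ would be compact by Lemma \ref{L-Comp}, giving a relatively compact zero-neighbourhood, whence $H(\D)$ would be normable (Kolmogorov) and then finite-dimensional (Riesz), a contradiction; equivalently, one could note that the infinite-dimensional Fr\'echet--Montel space $H(\D)$ is not normable. Both endings are routine and correct.
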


\begin{proof} Fix $t\in [0,1)$.
	Consider the operator $T_t\colon H(\D)\to H(\D)$, for $f\in H(\D)$, given by
	\[
	T_tf(z):= (1-tz)(zf(z))'=(1-tz)(f(z)+zf'(z)),\quad  z\in \D.
	\]
	Then $T_t$ is clearly well-defined. Moreover, its graph is closed. Indeed, for a given sequence $(f_n)_{n\in\N}\subset H(\D)$, suppose that $f_n\to f$ in $H(\D)$ and $T_tf_n\to g$ in $H(\D)$. Since  multiplication operators (by elements from $H(\D)$)
and the differentiation operator are continuous on $H(\D)$ and the evaluation functionals at points of $\D$ belong to $H(\D)'$, it follows that
	  $f'_n\to f'$ in $H(\D)$ and hence, $T_tf_n=(1-tz)(f_n+zf'_n)\to (1-tz)(f+zf')=T_tf$ in $H(\D)$. Accordingly, $g=T_tf$. Since $H(\D)$ is a Fr\'echet space, the closed graph theorem, \cite[Corollary 5.4.3]{J}, implies that $T_t\in \cL(H(\D))$.
	
	Finally, it is routine to verify that $C_t\circ T_t=T_t\circ C_t=I$. So,  the inverse operator  $C_t^{-1}=T_t\in \cL(H(\D))$ exists and hence, $C_t$ is a bi-continuous isomorphism of $H(\D)$ onto itself. In particular, $C_t$ cannot be compact.
\end{proof}

Let $\Lambda:=\{\frac{1}{n+1}\,:\, n\in\N_0\}$ and $\Lambda_0:=\Lambda\cup\{0\}$. We recall from \cite[Lemma 2.7]{ABR6} the following lemma, which is an extension of a result of Rhoades \cite{Rho}.

%%%%%%%%%%%%%%%%%%%%%%%%%%%%%%%%%%%%%%%%%%

\begin{lemma}\label{Lemma-1} For every $\mu\in\C\setminus \Lambda_0$ there exist $\delta=\delta_\mu>0$  and constants  $d_\delta, D_\delta>0$  such  that $\overline{B(\mu,\delta)}\cap \Lambda_0=\emptyset$ and
	\begin{equation}\label{eq.stimS}
		\frac{d_\delta}{n^{\alpha(\nu)}}\leq \prod_{k=1}^n\left|1-\frac{1}{k\nu}\right|\leq \frac{D_\delta}{n^{\alpha(\nu)}},\quad  \forall n\in\N,\ \nu\in B(\mu,\delta),
	\end{equation}
	where $\alpha(\nu):={\rm Re}(\frac{1}{\nu})$.
\end{lemma}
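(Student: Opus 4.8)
The plan is to establish the product asymptotic \eqref{eq.stimS} by relating the finite product $\prod_{k=1}^n\left|1-\frac{1}{k\nu}\right|$ to the $n$-th partial product associated with the Gamma function, and thereby extract the exponent $\alpha(\nu)={\rm Re}(\frac{1}{\nu})$ that governs the polynomial decay rate. First I would fix $\mu\in\C\setminus\Lambda_0$. Since $\Lambda_0=\{\frac{1}{n+1}:n\in\N_0\}\cup\{0\}$ is a closed set (it is a convergent sequence together with its limit $0$), its complement is open, so there exists $\delta=\delta_\mu>0$ with $\overline{B(\mu,\delta)}\cap\Lambda_0=\emptyset$. Shrinking $\delta$ if necessary, I would also arrange that $\overline{B(\mu,\delta)}$ is bounded away from $0$, so that $\frac{1}{\nu}$ stays in a compact set and $|\frac{1}{k\nu}|<\frac12$ for all $k$ exceeding some fixed $k_0$ uniformly in $\nu\in B(\mu,\delta)$; this uniform separation is what makes all the constants below depend only on $\delta$.

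The analytic core is the standard estimate for logarithms of such products. Writing $\log\prod_{k=1}^n\left|1-\frac{1}{k\nu}\right|=\sum_{k=1}^n\log\left|1-\frac{1}{k\nu}\right|$, I would use the expansion $\log|1-w|={\rm Re}\log(1-w)={\rm Re}\left(-w-\frac{w^2}{2}-\cdots\right)$ for $|w|<1$. With $w=\frac{1}{k\nu}$ this gives $\log\left|1-\frac{1}{k\nu}\right|=-{\rm Re}\frac{1}{k\nu}+O\!\left(\frac{1}{k^2}\right)=-\frac{\alpha(\nu)}{k}+O\!\left(\frac{1}{k^2}\right)$, where the $O$-term is bounded uniformly in $\nu\in B(\mu,\delta)$ because $\frac1\nu$ ranges over a compact set. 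Summing over $k$ from $k_0+1$ to $n$ and invoking $\sum_{k=1}^n\frac1k=\log n+\gamma+O(\frac1n)$ produces $\sum_{k=1}^n\log\left|1-\frac{1}{k\nu}\right|=-\alpha(\nu)\log n+C(\nu)+o(1)$, where $C(\nu)$ stays within a bounded range as $\nu$ varies over $B(\mu,\delta)$ (the finitely many initial terms $k\le k_0$ are also uniformly bounded, since $\nu$ avoids the points where $1-\frac{1}{k\nu}=0$, i.e.\ $\nu=\frac1k\in\Lambda$). Exponentiating yields $\prod_{k=1}^n\left|1-\frac{1}{k\nu}\right|=n^{-\alpha(\nu)}\,e^{C(\nu)+o(1)}$, and bounding the uniformly bounded factor $e^{C(\nu)+o(1)}$ above and below by positive constants $D_\delta$ and $d_\delta$ depending only on $\delta$ gives precisely \eqref{eq.stimS}.

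The main obstacle is ensuring every estimate is \emph{uniform} in $\nu\in B(\mu,\delta)$ rather than merely pointwise, since the constants $d_\delta,D_\delta$ must not depend on $\nu$. This is where the preliminary geometric setup pays off: the condition $\overline{B(\mu,\delta)}\cap\Lambda_0=\emptyset$ guarantees $\nu$ is uniformly bounded away both from $0$ (so $\frac1\nu$ lives in a compact set and the tail $O(\frac{1}{k^2})$ bound is uniform) and from each $\frac1k\in\Lambda$ (so no factor $|1-\frac{1}{k\nu}|$ degenerates to $0$, keeping the finite initial block uniformly bounded). Because $\overline{B(\mu,\delta)}$ is compact and disjoint from $\Lambda_0$, the continuous function $\nu\mapsto C(\nu)$ attains finite positive maximum and minimum of $e^{C(\nu)}$ there, which furnishes $D_\delta$ and $d_\delta$. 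The only mild subtlety is that the $o(1)$ error, arising from $\sum_{k>n}\frac{1}{k^2}$ and the harmonic remainder, can likewise be bounded uniformly in $n$ and $\nu$, so that it can be absorbed into the constants for all $n\in\N$ rather than only asymptotically; this is routine once the uniform tail bounds are in place. As the authors note, this is essentially the extension of Rhoades' result recorded in \cite[Lemma 2.7]{ABR6}, so I would finally cite that reference to confirm the form of the constants.
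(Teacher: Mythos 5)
Your proposal is correct, but there is nothing in the paper to compare it against: the authors do not prove this lemma at all, they simply recall it from \cite[Lemma 2.7]{ABR6} (itself an extension of a result of Rhoades \cite{Rho}). So you have supplied an actual argument where the paper offers only a citation. Your argument is the standard and sound one: compactness of $\Lambda_0$ gives the $\delta$ with $\overline{B(\mu,\delta)}\cap\Lambda_0=\emptyset$; this simultaneously keeps $\frac{1}{\nu}$ in a compact set (making the $O(k^{-2})$ remainder in $\log\bigl|1-\frac{1}{k\nu}\bigr|=-\frac{\alpha(\nu)}{k}+O(k^{-2})$ uniform and $\alpha(\nu)$ bounded) and keeps each factor with $k\le k_0$ uniformly bounded away from $0$ and $\infty$; then the harmonic-sum asymptotics and exponentiation yield the two-sided bound $d_\delta n^{-\alpha(\nu)}\le\prod_{k=1}^n\bigl|1-\frac{1}{k\nu}\bigr|\le D_\delta n^{-\alpha(\nu)}$. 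Two small points of hygiene: you do not actually need (and have not established) continuity of your constant $C(\nu)$ --- uniform boundedness of $\sum_{k=1}^n\log\bigl|1-\frac{1}{k\nu}\bigr|+\alpha(\nu)\log n$ over $n\in\N$ and $\nu\in B(\mu,\delta)$ is all that is required, and that is what your estimates deliver; and the finitely many indices $n\le k_0$ should be disposed of explicitly by noting that for each such $n$ the map $\nu\mapsto n^{\alpha(\nu)}\prod_{k=1}^n\bigl|1-\frac{1}{k\nu}\bigr|$ is positive and continuous on the compact set $\overline{B(\mu,\delta)}$, hence bounded above and below there. With those two remarks folded in, your proof is complete and is, in substance, the argument one expects behind the cited \cite[Lemma 2.7]{ABR6}.
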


%%%%%%%%%%%%%%%%%%%%%%%%%%%%%%%%%%%%%%%%%%

%\begin{lemma}\label{Lemma-1} For every $\nu\in\C\setminus \Lambda_0$ there exist $d,D>0$ depending on $\alpha:={\rm Re}(\frac{1}{\nu})$ such  that
%	\begin{equation}\label{eq.stimS}
%		\frac{d}{n^\alpha}\leq \prod_{k=1}^n\left|1-\frac{1}{k\nu}\right|\leq \frac{D}{n^\alpha},\quad n\in\N.
%		\end{equation}
%	\end{lemma}
\begin{remark}\label{R.Remark3.5}	\rm
As a direct  application of Lemma \ref{Lemma-1} we obtain,  for every $\mu\in \C\setminus \Lambda_0$, that there exist $\delta>0$ and $d_\delta, D_\delta>0$ such that $\overline{B(\mu, \delta)}\cap \Lambda_0=\emptyset$ and,  for every $\nu\in B(\mu,\delta)$ and  $n\in\N_0$,  we have that
\begin{equation}\label{H(D)-stima}
	d_\delta D_\delta^{-1}\left(\frac{n-h}{n+1}\right)^{\alpha(\nu)}\leq \prod_{j=n-h+1}^{n+1}\left|1-\frac{1}{j\nu}\right|\leq D_\delta d_\delta^{-1}\left(\frac{n-h}{n+1}\right)^{\alpha(\nu)},
	\end{equation}
for all $h\in \{1,\ldots, n-1\}$, where $\alpha(\nu)={\rm Re}(\frac{1}{\nu})$.
\end{remark}

%In the following, for $r\in (0,1)$ and $f\in H(\D)$ we set
%\[
%q_r(f):=\sup_{|z|\leq r}|f(z)|.
%\]
%The system $\{q_r\}_{r\in (0,1)}$ of norms generates the locally convex topology of $(H(\D),\tau_c)$. 
For each $k\in\N$ with $k\geq 2$ define $r_k:=(1-\frac{1}{k})$. Define the norms $\|\cdot\|_k$ and $|||\cdot|||_k$ on $H(\D)$ by
\[
\|f\|_k:=\sum_{n=0}^\infty |\hat{f}(n)|r_k^n,\quad f=\sum_{n=0}^\infty \hat{f}(n)z^n,
\]
and
\[
|||f|||_k:=\sup_{n\in\N_0} |\hat{f}(n)|r_k^n\quad f=\sum_{n=0}^\infty \hat{f}(n)z^n.
\]

\begin{lemma}\label{L-seminorme} Each of the sequences $\{\|\cdot\|_k\}_{k\geq 2}$ and $\{|||\cdot|||_k\}_{k\geq 2}$ is a fundamental system of norms  for $(H(\D),\tau_c)$.
\end{lemma}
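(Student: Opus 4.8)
The plan is to compare both given families with the standard fundamental system $\{q_r:0<r<1\}$ from \eqref{eq.norme-sup}, which generates $\tau_c$. First I would check that each of $\{\|\cdot\|_k\}_{k\ge 2}$ and $\{|||\cdot|||_k\}_{k\ge 2}$ really is a family of norms on $H(\D)$: each is finite because $f\in H(\D)$ has radius of convergence at least $1$, so $\sum_n|\hat f(n)|r_k^n$ converges for every $r_k<1$, and each vanishes only when all Taylor coefficients vanish, i.e.\ when $f=0$. Since $r_k=1-\tfrac1k$ increases to $1$, one has $r_k^n\le r_{k+1}^n$ for every $n$, so each is an increasing sequence of norms; to prove it is a fundamental system for $(H(\D),\tau_c)$ it therefore remains only to show that the topology it generates coincides with $\tau_c$, for which it suffices to establish mutual domination against the $q_r$.

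The two estimates I would use throughout are, for $f=\sum_n\hat f(n)z^n$ and $0<r<1$, the Cauchy coefficient bound $|\hat f(n)|\le r^{-n}q_r(f)$ (exactly as in the computation of $r_m(\Phi(f))$ in Section~1) and the triangle-inequality bound $q_r(f)\le\sum_{n=0}^\infty|\hat f(n)|r^n$. Taking $r=r_k$ in the first gives $|\hat f(n)|r_k^n\le q_{r_k}(f)$ for all $n$, hence $|||f|||_k\le q_{r_k}(f)$; taking $r=r_k$ in the second gives $q_{r_k}(f)\le\|f\|_k$. Thus each new norm is dominated by the single standard seminorm $q_{r_k}$, which already shows that the topology generated by either family is coarser than $\tau_c$.

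For the reverse domination I would fix $0<s<1$ and choose $k$ with $r_k>s$, which is possible precisely because $r_k\to 1$. For the $\ell^1$-type norms the second estimate gives this at once: $q_s(f)\le\sum_n|\hat f(n)|s^n\le\sum_n|\hat f(n)|r_k^n=\|f\|_k$. For the sup-type norms I would insert the Cauchy bound at level $r_k$ and sum a geometric series,
\[
q_s(f)\le\sum_{n=0}^\infty|\hat f(n)|s^n=\sum_{n=0}^\infty\big(|\hat f(n)|r_k^n\big)\Big(\tfrac{s}{r_k}\Big)^n\le|||f|||_k\sum_{n=0}^\infty\Big(\tfrac{s}{r_k}\Big)^n=\frac{r_k}{r_k-s}\,|||f|||_k,
\]
where convergence of the series uses $s<r_k$. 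This shows $\tau_c$ is coarser than the topology of either family, and together with the previous paragraph yields equality, completing the proof. The only point demanding care is this last step: the geometric constant $r_k/(r_k-s)$ blows up as $r_k\downarrow s$, so one must genuinely use that $r_k$ can be taken strictly larger than $s$, which is guaranteed by $r_k\uparrow 1$; everything else is a routine two-sided comparison of norms.
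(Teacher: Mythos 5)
Your proof is correct and takes essentially the same approach as the paper's: a two-sided domination against the standard system $\{q_r\}_{0<r<1}$, using the Cauchy coefficient bound $|\hat f(n)|\le r^{-n}q_r(f)$ in one direction and the triangle inequality plus a geometric series $\sum_n (s/r_k)^n$ in the other. The only organizational difference is that you compare $|||\cdot|||_k$ directly with the $q_r$ (getting $|||f|||_k\le q_{r_k}(f)$ and $q_s(f)\le\frac{r_k}{r_k-s}|||f|||_k$), whereas the paper chains through the $\ell^1$-type norms via $|||f|||_k\le\|f\|_k\le k^2|||f|||_{k+1}$; the estimates are interchangeable and the conclusion identical.
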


\begin{proof}
	Given  $r\in (0,1)$ choose any $k\geq 2$ such that  $0<r<(1-\frac{1}{k})$. Then, for every $f\in H(\D)$,  we have
	\[
	q_r(f)=\sup_{|z|=r}\left|\sum_{n=0}^\infty \hat{f}(n)z^n\right|\leq \sum_{n=0}^\infty |\hat{f}(n)|r^n\leq \sum_{n=0}^\infty |\hat{f}(n)|\left(1-\frac{1}{k}\right)^n=\|f\|_k.
	\]
	On the other hand, given $k\geq 2$, let $r_k:=(1-\frac{1}{k})<(1-\frac{1}{k+1}):=r_{k+1}$. By the Cauchy inequalities, for $n\in\N_0$, we have
	\[
	|\hat{f}(n)|\leq \frac{1}{r_{k+1}^n}\max_{|z|=r_{k+1}}|f(z)|= \frac{1}{r_{k+1}^n} q_{r_{k+1}}(f),\quad f\in H(\D),
	\]
	and hence,
	\[
	\|f\|_{r_k}=\sum_{n=0}^\infty |\hat{f}(n)|r_k^n\leq q_{r_{k+1}}(f)\sum_{n=0}^\infty\left(\frac{r_k}{r_{k+1}}\right)^n= cq_{r_{k+1}}(f),\quad f\in H(\D),
	\]
	with  $c=\frac{1}{1-\frac{r_k}{r_{k+1}}}=k^2>0$ as $\frac{r_k}{r_{k+1}}<1$, which is independent of $f$.
	
	So, the systems $\{q_r\}_{r\in (0,1)}$ and $\{\|\cdot\|_k\}_{k\geq 2}$ are equivalent on $H(\D)$.
	
	Observe, for every $k\geq 2$, that
	\[
	|||f|||_k=\sup_{n\in\N_0}|\hat{f}(n)|r^n_k\leq \sum_{n=0}^\infty |\hat{f}(n)|r_k^n=\|f\|_k,\quad f\in H(\D),
	\]
	and that
	\begin{align*}
	\|f\|_k&=\sum_{n=0}^\infty |\hat{f}(n)|r_k^n=\sum_{n=0}^\infty |\hat{f}(n)|\left(\frac{r_k}{r_{k+1}}\right)^nr^n_{k+1}\\
	&\leq \sup_{n\in\N_0}|\hat{f}(n)|r^n_{k+1}\sum_{n=0}^\infty \left(\frac{r_k}{r_{k+1}}\right)^n= k^2|||f|||_{k+1},
	\end{align*}
	for $f\in H(\D)$, where $\sum_{n=0}^\infty\left(\frac{r_k}{r_{k+1}}\right)^n=k^2$.
	Therefore, the systems $\{\|\cdot\|_k\}_{k\geq 2}$ and $\{|||\cdot|||_k\}_{k\geq 2}$ are equivalent.
\end{proof}

\begin{prop}\label{Spectrum-H(D)} For each $t\in [0,1)$ the spectra of the   operator $C_t\in \cL(H(\D))$ are given by
	\begin{equation}\label{Sp-H(D)}
	\sigma_{pt}(C_t;H(\D))=\sigma(C_t;H(\D))=\Lambda
		\end{equation}
	and
	\begin{equation}\label{Sp-H(D)-1}
		\sigma^*(C_t;H(\D))=\Lambda_0.
	\end{equation}
\end{prop}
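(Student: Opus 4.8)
The plan is to transfer everything to the action of $C_t$ on Taylor coefficients, that is, to the discrete operator $C_t^\omega$ of \eqref{Ces-op}. For the point spectrum I would argue exactly as in Proposition \ref{Spectrum}: by \eqref{eq.rapp-serie}, for $f=\sum_{n=0}^\infty\hat f(n)z^n\in H(\D)\s\{0\}$ the identity $C_tf=\lambda f$ is equivalent to $C_t^\omega\hat f=\lambda\hat f$ with $\hat f\neq 0$, so $\sigma_{pt}(C_t;H(\D))\su\sigma_{pt}(C_t^\omega;\omega)=\Lambda$ by \cite[Lemma 3.6]{CR4}. Conversely each eigenvector $x^{[m]}\in\ell^1$ of $C_t^\omega$ is the coefficient sequence of some $g_m\in H^\infty\su H(\D)$, an eigenvector of $C_t$ for $\frac{1}{m+1}$; hence $\sigma_{pt}(C_t;H(\D))=\Lambda$. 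In particular $\Lambda\su\sigma(C_t;H(\D))\su\sigma^*(C_t;H(\D))$, and since $\sigma^*$ is closed, $\Lambda_0=\ov\Lambda\su\sigma^*(C_t;H(\D))$.

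Next I would compute the resolvent on the coefficient side. Fix $\mu\notin\Lambda$ and solve $(\mu I-C_t^\omega)x=y$. Writing $S_n:=\sum_{k=0}^n t^{n-k}x_k$, so that $S_n=tS_{n-1}+x_n$ and $(C_t^\omega x)_n=\frac{S_n}{n+1}$, the equation becomes $(\mu-\frac{1}{n+1})S_n=y_n+\mu tS_{n-1}$, which, since $\mu\neq\frac{1}{n+1}$ for every $n$, solves recursively from $S_{-1}=0$ to give
\begin{equation*}
S_n=\sum_{j=0}^n\frac{y_j}{\mu-\frac{1}{j+1}}\prod_{i=j+1}^n\frac{\mu t}{\mu-\frac{1}{i+1}},\qquad x_n=S_n-tS_{n-1}.
\end{equation*}
Rewriting $\prod_{i=j+1}^n\frac{\mu t}{\mu-\frac{1}{i+1}}=t^{n-j}\prod_{l=j+2}^{n+1}(1-\frac{1}{l\mu})^{-1}$ brings the product into the form governed by Remark \ref{R.Remark3.5}: choosing $\delta>0$ with $\ov{B(\mu,\delta)}\cap\Lambda_0=\es$ and applying \eqref{H(D)-stima} with $h=n-j-1$ yields, for all $\nu\in B(\mu,\delta)$ and $0\le j\le n-2$,
\begin{equation*}
\left|\prod_{i=j+1}^n\frac{\nu t}{\nu-\frac{1}{i+1}}\right|\le D_\delta d_\delta^{-1}\,t^{n-j}\Big(\frac{n+1}{j+1}\Big)^{\alpha(\nu)},\qquad\alpha(\nu)={\rm Re}(1/\nu),
\end{equation*}
the two boundary terms $j\in\{n-1,n\}$ being estimated directly. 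Since $\Lambda_0$ is compact and $\ov{B(\mu,\delta)}\cap\Lambda_0=\es$, the factors $|\nu-\frac{1}{j+1}|^{-1}$ are also bounded uniformly in $j$ and in $\nu\in B(\mu,\delta)$.

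The decisive step is then a single estimate in the norms $\|\cdot\|_k$ of Lemma \ref{L-seminorme}. Combining the two displays gives $|x_n|\lesssim\sum_{j=0}^n|y_j|\,t^{n-j}(\frac{n+1}{j+1})^{\alpha(\nu)}$; summing against $r_k^n$, interchanging the order of summation and substituting $m=n-j$ reduce the inner sum to $r_k^j(j+1)^{\alpha(\nu)}\sum_{m\ge 0}(j+m+1)^{\alpha(\nu)}(tr_k)^m$, and the elementary inequality $(j+m+1)^{\alpha(\nu)}\le(j+1)^{\alpha(\nu)}(m+1)^{|\alpha(\nu)|}$ together with $tr_k<1$ yields $\|x\|_k\le K\|y\|_k$ with $K=C_\delta\sum_{m\ge 0}(m+1)^{|\alpha(\nu)|}(tr_k)^m<\infty$. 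Thus $x$ is the coefficient sequence of some $g\in H(\D)$ with $(\mu I-C_t)g=f$, so $\mu I-C_t$ is surjective, and it is injective because $\mu\notin\Lambda=\sigma_{pt}(C_t;H(\D))$; hence $\mu\in\rho(C_t;H(\D))$.

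Finally, since $\alpha(\nu)$ stays bounded on $B(\mu,\delta)$, the constant $K$ is uniform in $\nu$, so $\{R(\nu,C_t):\nu\in B(\mu,\delta)\}$ is equicontinuous and $\mu\in\rho^*(C_t;H(\D))$; this gives $\sigma^*(C_t;H(\D))\su\Lambda_0$ and hence \eqref{Sp-H(D)-1}. The same argument applies to every $\mu\notin\Lambda$ with $\mu\neq 0$, while $0\in\rho(C_t;H(\D))$ by Proposition \ref{NonCompact}; so $\sigma(C_t;H(\D))\su\Lambda$, which with $\Lambda\su\sigma$ proves \eqref{Sp-H(D)}. The only real obstacle is the uniform product bound, and that is delivered verbatim by Remark \ref{R.Remark3.5}; the ensuing norm estimate is a routine convergent geometric-polynomial sum.
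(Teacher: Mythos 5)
Your proposal is correct and follows essentially the same route as the paper: both arguments identify $\sigma_{pt}$ by transferring eigenvectors through $\Phi$ to the discrete operator $C_t^\omega$, solve $(\nu I-C_t)f=g$ coefficientwise (your $S_n$-recursion reproduces exactly the paper's explicit resolvent formula \eqref{eq.coeff}), bound the resulting products $\prod_j|1-\frac{1}{j\nu}|$ via Remark \ref{R.Remark3.5}, and prove a $\nu$-uniform estimate $\|x\|_k\leq K\|y\|_k$ in the norms of Lemma \ref{L-seminorme} to obtain both $\sigma(C_t;H(\D))=\Lambda$ and the equicontinuity giving $\sigma^*(C_t;H(\D))=\Lambda_0$, with $0\in\rho(C_t;H(\D))$ supplied by Proposition \ref{NonCompact} in both treatments. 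The only differences are cosmetic --- the paper first verifies convergence of the resolvent series via Cauchy estimates and then separately carries out the $\|\cdot\|_k$-estimate, whereas you extract membership in $H(\D)$ and equicontinuity from a single estimate --- apart from two harmless slips: the factor $(j+1)^{\alpha(\nu)}$ in your reduced inner sum should be $(j+1)^{-\alpha(\nu)}$ (your subsequent use of $(j+m+1)^{\alpha(\nu)}\leq (j+1)^{\alpha(\nu)}(m+1)^{|\alpha(\nu)|}$ then closes the estimate correctly), and ``$(\mu I-C_t)g=f$'' should read ``$=g$'' for the function whose Taylor coefficients are $y$.
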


\begin{proof} Let $t\in [0,1)$ be fixed.
For any weight function $v$ on $[0,1)$ satisfying $\lim_{r\to 1^-}v(r)=0$, we have $H^\infty_v\subseteq H(\D)$ continuously and $\Phi\colon H(\D)\to \omega$ is  a continuous imbedding. Accordingly, $\sigma_{pt}(C_t;H_v^\infty)\subseteq \sigma_{pt}(C_t;H(\D))\subseteq \Lambda$; see the proof of Proposition \ref{Spectrum}. Since $\sigma_{pt}(C_t;H^\infty_v)=\Lambda$ (cf. Proposition \ref{Spectrum}) and $\sigma_{pt}(C_t^\omega;\omega)=\Lambda$ \cite[Theorem 3.7]{ABR-N}, it  follows that $\sigma_{pt}(C_t;H(\D))=\Lambda$. Moreover, in view of Proposition \ref{Spectrum} above and Theorem 3.7 in \cite{ABR-N},  the eigenspace corresponding to each eigenvalue $\frac{1}{n+1}\in \Lambda$ is 1-dimensional.
By  Proposition \ref{NonCompact}, the operator $C_t\colon H(\D)\to H(\D)$ is a bi-continuous isomorphism and so $0\not\in \sigma(C_t; H(\D))$.

The  claim is that $\C\setminus \Lambda_0\subseteq \rho(C_t;H(\D))$.
To establish this claim,  fix $\nu\in \C\setminus\Lambda_0$. Given $g(z)=\sum_{n=0}^\infty c_nz^n\in H(\D)$, consider the identity
\begin{equation}\label{eq.ident}
	(C_t-\nu I)f(z)=g(z),\quad z\in \D,
	\end{equation}
where $f(z)=\sum_{n=0}^\infty a_n z^n\in H(\D)$ is to be determined. It follows from \eqref{eq.rapp-serie} that
 $C_tf(z)=\sum_{n=0}^\infty (\frac{t^na_0+t^{n-1}a_1+\ldots +a_n}{n+1})z^n$ from which the identity $(C_t-\nu I)f(z)=\sum_{n=0}^\infty (\frac{t^na_0+t^{n-1}a_1+\ldots +a_n}{n+1}-\nu a_n)z^n$ is clear. So, \eqref{eq.ident} is satisfied if and only if
\[
\sum_{n=0}^\infty (\frac{t^na_0+t^{n-1}a_1+\ldots +a_n}{n+1}-\nu a_n)z^n=\sum_{n=0}^\infty c_nz^n, \quad z\in \D,
\]
that is, if and only if
\[
\frac{t^na_0+t^{n-1}a_1+\ldots +a_n}{n+1}-\nu a_n=c_n,\quad n\in\N_0.
\]
In view of this we can argue, as in the proof of \cite[Lemma 3.6]{ABR-N}, to show  that if a function $f\in H(\D)$ exists which satisfies the identity \eqref{eq.ident}, then the Taylor coefficients $(a_n)_{n\in\N_0}$ of $f$ must verify the following equalities
\begin{align}\label{eq.coeff}
	a_0&=\frac{c_0}{1-\nu}\nonumber\\
	a_n&=\frac{c_n}{(\frac{1}{n+1}-\nu)}+\sum_{h=1}^n(-1)^h\frac{\nu^{h-1}t^hc_{n-h}}{(n+1)\prod_{j=n-h+1}^{n+1}(\frac{1}{j}-\nu)}\\
	&=: A_n+B_n, \quad n\geq 1.\nonumber
\end{align}
Observe, for each $n\geq 1$ and $h\in\{1,\ldots,n\}$, that
\[
(-1)^h\prod_{j=n-h+1}^{n+1}\left(\frac{1}{j}-\nu\right)=-\prod_{j=n-h+1}^{n+1}\left(\nu -\frac{1}{j}\right)=-\nu^{h+1}\prod_{j=n-h+1}^{n+1}\left(1-\frac{1}{j\nu}\right)
\]
and so 
\[
B_n=-\sum_{h=1}^n\frac{\nu^{h-1}t^hc_{n-h}}{\nu^{h+1}(n+1)\prod_{j=n-h+1}^{n+1}(1-\frac{1}{j\nu})}=-\frac{1}{\nu^2}\sum_{h=1}^n\frac{t^hc_{n-h}}{(n+1)\prod_{j=n-h+1}^{n+1}(1-\frac{1}{j\nu})}.
\]
Accordingly, to verify the claim we need to prove that the power series $\sum_{n=0}^\infty a_nz^n$ is convergent in $\D$, with $(a_n)_{n\in\N_0}$ defined according to \eqref{eq.coeff}. First, observe that the series $g(z)=\sum_{n=0}^\infty c_nz^n$ is convergent in $\D$ and satisfies
\[
\limsup_{n\to\infty}\sqrt[n]{|c_n|}=\limsup_{n\to\infty}\sqrt[n]{\frac{|c_n|}{|\frac{1}{n+1}-\nu|}}=\limsup_{n\to\infty}\sqrt[n]{|A_n|}.
\]
Therefore, the series $\sum_{n=1}^\infty A_nz^n$ has the same radius of convergence as the series $\sum_{n=0}^\infty c_nz^n$ and hence, it converges in $H(\D)$. Accordingly,  $f_1(z):=\sum_{n=1}^\infty A_nz^n$, for $z\in\D$, belongs to $H(\D)$. On the other hand, the series
\begin{align*}
\sum_{n=1}^\infty B_nz^n&=-\frac{1}{\nu^2}\sum_{n=1}^\infty \sum_{h=1}^n\frac{t^h c_{n-h}}{(n+1)\prod_{j=n-h+1}^{n+1}(1-\frac{1}{j\nu})}\\
&=-\frac{1}{\nu^2}\sum_{h=1}^\infty t^hz^h\sum_{n=h}^\infty\frac{c_{n-h}z^{n-h}}{(n+1)\prod_{j=n-h+1}^{n+1}(1-\frac{1}{j\nu})},\quad z\in\D.
\end{align*}
To establish the convergence of the series $\sum_{n=1}^\infty B_nz^n$ in $H(\D)$, fix $z\in\D\setminus\{0\}$ and $r\in (|z|,1)$. Recall,  for every $n\in\N_0$, that the Taylor coefficients of $g$ satisfy (as $\frac{1}{r}>1$)
\[
|c_n|=\left|\frac{g^{(n)}(0)}{n!}\right|=\left|\frac{1}{2\pi i}\int_{|\xi|=r}\frac{g(\xi)}{\xi^{n+1}}\,d\xi\right|\leq \frac{1}{r^{n}}\max_{|\xi|=r}|g(\xi)|\leq \frac{C}{r^{n+1}}
\]
where $C:=\max_{|\xi|=r}|g(\xi)|$.
 Therefore, setting $\alpha:=\alpha(\nu)={\rm Re}(\frac{1}{\nu})$ and $d:=d_\delta$ and $D:=D_\delta$ for a suitable $\delta>0$ (cf. Remark \ref{R.Remark3.5}), we obtain via \eqref{eq.stimS} and \eqref{H(D)-stima} that
\begin{align*}
&	\sum_{h=1}^\infty t^h|z|^h\sum_{n=h}^\infty \frac{|c_{n-h}|\,|z|^{n-h}}{(n+1)\prod_{j=n-h+1}^{n+1}|1-\frac{1}{j\nu}|}\\
&\leq C\sum_{h=1}^\infty t^h|z|^{h-1}\left(\frac{|z|}{r}d^{-1}(h+1)^{-\alpha-1}+\sum_{n=h+1}^\infty \left(\frac{|z|}{r}\right)^{n-h+1}Dd^{-1}\left(\frac{n+1}{n-h}\right)^\alpha\right)\\
	&=Cd^{-1}\sum_{h=1}^\infty t^h(h+1)^{-\alpha-1}|z|^{h}+ CDd^{-1}\sum_{h=1}^\infty t^h|z|^{h-1}\sum_{n=h+1}^\infty \left(\frac{|z|}{r}\right)^{n-h+1}\left(\frac{n+1}{n-h}\right)^\alpha\\
	&\leq Cd^{-1}\left(\sum_{h=1}^\infty t^h(h+1)^{-\alpha-1}|z|^{h}+D\sum_{h=1}^\infty t^h |z|^{h-1}\max\{1, (2+h)^\alpha\}\sum_{n=h+1}^\infty\left(\frac{|z|}{r}\right)^{n-h+1}\right),
\end{align*}
 which is finite after  observing that if $\alpha\leq 0$, then $\left(\frac{n+1}{n-h}\right)^\alpha=\left(\frac{n-h}{n+1}\right)^{-\alpha}\leq 1$ for every $h\in\N$ and every $n\geq h+1$, whereas if $\alpha>0$, then $(\frac{n+1}{n-h})^\alpha=(1+\frac{h+1}{n-h})^\alpha\leq (2+h)^\alpha$. This implies that the series $\sum_{n=1}^\infty B^nz^n$ converges in $H(\D)$. Accordingly, $f_2(z):=\sum_{n=1}^\infty B_nz^n$, for $z\in\D$, belongs to $H(\D)$.

Set $f(z):=\frac{c_0}{1-\nu}+f_1(z)+f_2(z)$, for $z\in\D$. Then $f\in H(\D)$. Moreover, the arguments above imply that $f$ satisfies  \eqref{eq.ident}. The identities \eqref{eq.coeff} imply that  $f$ is the unique solution of \eqref{eq.ident}.
Accordingly, the inverse operator $(C_t-\nu I)^{-1}\colon H(\D)\to H(\D)$ exists. In particular,  $(C_t-\nu I)^{-1}\in \cL(H(\D))$ as it is the inverse of a continuous linear operator on a Fr\'echet space.
%Indeed,  $(C_t-\nu I)^{-1}$ has closed graph. Suppose $(g_n)_{n\in\N}\subset H(\D)$ such that $g_n\to g$ and $f_n:= (C_t-\nu I)^{-1}g_n\to f$ in $H(\D)$ as $n\to\infty$. Since $(C_t-\nu I)\in \cL(H(\D))$, it follows that $g_n=(C_t-\nu I)f_n\to (C_t-\nu I)f$ in $H(\D)$ as $n\to\infty$. Thus, $(C_t-\nu I)f=g$ from which follows that $f=(C_t-\nu I)^{-1}g$.

Since $\nu\in \C\setminus\Lambda_0$ is arbitrary and $0\in \rho(C_t;H(\D))$, we can conclude that $\sigma(C_t;H(\D))=\Lambda$.

It remains to show that $\sigma^*(C_t;H(\D))=\Lambda_0$. To establish this,  fix $\mu\in  \C\setminus\Lambda_0$ and observe, by Lemma \ref{Lemma-1}, that there exist $\delta>0$ and constants $d_\delta, D_\delta>0$ such that $\overline{B(\mu,\delta)}\cap\Lambda_0=\emptyset$ and the inequalities \eqref{eq.stimS} and \eqref{H(D)-stima} are satisfied.
We will show that $B(\mu,\delta)\subset \rho(C_t;H(\D))$ and that the set $\{(C_t-\nu I)^{-1}\,:\, \nu\in B(\mu,\delta)\}$ is equicontinuous in $\cL(H(\D))$. To see this,  first observe that the function $\nu\in \overline{B(\mu,\delta)} \mapsto {\rm Re}(\frac{1}{\nu})\in\R$ is continuous and hence,  $\alpha_0:=\max_{\nu\in \overline{B(\mu,\delta)} }\{{\rm Re}(\frac{1}{\nu})\}$ exists. For the sake of simplicity of notation set $d:=d_\delta$ and $D:=D_\delta$.

Let $\nu\in B(\mu,r)$, where $r:=\frac{1}{2}d(\Lambda_0, \overline{B(\mu,\delta)})>0$ has the property that $|\nu-\frac{1}{j}|>r$ for all $j\in\N$. It was proved above, for any fixed $g(z)=\sum_{n=0}^\infty c_nz^n\in H(\D)$, that
\[
(C_t-\nu I)^{-1}g(z)=\frac{c_0}{1-\nu}+\sum_{n=1}^\infty\left(\frac{c_n}{\frac{1}{n+1}-\nu}-\frac{1}{\nu^2}\sum_{h=1}^n\frac{(-1)^ht^hc_{n-h}}{(n+1)\prod_{j=n-h+1}^{n+1}(1-\frac{1}{j\nu})}\right)z^n,
\]
for each $z\in\D.$. So, for $k\geq 2$ fixed, consider the norm $\|\cdot\|_k$ in $H(\D)$. Then we have, via \eqref{eq.coeff}, that
\begin{align*}
&\|(C_t-\nu I)^{-1}g\|_k\\
&\leq \frac{|c_0|}{|1-\nu|}+\sum_{n=1}^\infty\left|\frac{c_n}{\frac{1}{n+1}-\nu}- \frac{1}{\nu^2}\sum_{h=1}^n\frac{(-1)^ht^hc_{n-h}}{(n+1)\prod_{j=n-h+1}^{n+1}(1-\frac{1}{j\nu})}\right|\left(1-\frac{1}{k}\right)^n\\
&\leq \left(\frac{1}{r}\sum_{n=0}^\infty|c_n|\left(1-\frac{1}{k}\right)^n\right)+\frac{1}{|\nu|^2}\sum_{n=1}^\infty\sum_{h=1}^n\frac{t^h|c_{n-h}|}{(n+1)\prod_{j=n-h+1}^{n+1}|1-\frac{1}{j\nu}|}\left(1-\frac{1}{k}\right)^n\\
&=\frac{1}{r}\|g\|_k+\frac{1}{|\nu|^2}\sum_{h=1}^\infty t^h\left(1-\frac{1}{k}\right)^h\sum_{n=h}^\infty\frac{|c_{n-h}|}{(n+1)\prod_{j=n-h+1}^{n+1}|1-\frac{1}{j\nu}|}\left(1-\frac{1}{k}\right)^{n-h}.
\end{align*}
Moreover, \eqref{eq.stimS} and \eqref{H(D)-stima} with $\alpha(\nu)={\rm Re}(\frac{1}{\nu})\leq \alpha_0$ imply, for each $h\in\N$, that
\begin{align*}
	&\sum_{n=h}^\infty \frac{|c_{n-h}|}{(n+1)\prod_{j=n-h+1}^{n+1}|1-\frac{1}{j\nu}|}\left(1-\frac{1}{k}\right)^{n-h}=\sum_{l=0}^\infty \frac{|c_{l}|}{(l+h+1)\prod_{j=l+1}^{l+h+1}|1-\frac{1}{j\nu}|}\left(1-\frac{1}{k}\right)^{l}\\
	&= \frac{|c_0|}{(h+1)\prod_{j=1}^{h+1}|1-\frac{1}{j\nu}|}+\sum_{l=1}^\infty \frac{|c_{l}|}{(l+h+1)\prod_{j=l+1}^{l+h+1}|1-\frac{1}{j\nu}|}\left(1-\frac{1}{k}\right)^{l}\\
	&\leq d^{-1}|c_0|(h+1)^{\alpha(\nu) -1}+d^{-1}D \sum_{l=1}^\infty \frac{|c_l|}{l+h+1}\left(\frac{l+h+1}{l}\right)^{\alpha(\nu)}\left(1-\frac{1}{k}\right)^l\\
	&\leq d^{-1}|c_0|(h+1)^{\alpha_0 -1}+d^{-1}D \sum_{l=1}^\infty \frac{|c_l|}{l+h+1}\left(\frac{l+h+1}{l}\right)^{\alpha_0}\left(1-\frac{1}{k}\right)^l\\
	&\leq \max\{d^{-1},d^{-1}D\}(2+h)^{\alpha_0}\sum_{l=0}^\infty |c_l|\left(1-\frac{1}{k}\right)^l=K(2+h)^{\alpha_0}\|g\|_k,
\end{align*}
with $K:=\max\{d^{-1},d^{-1}D\}$, and hence, since $|\nu|>r$ for all $\nu\in B(\mu,\delta)$, that
\begin{align*}
&\frac{1}{|\nu|^2}\sum_{h=1}^\infty t^h\left(1-\frac{1}{k}\right)^h\sum_{n=h}^\infty\frac{|c_{n-h}|}{(n+1)\prod_{j=n-h+1}^{n+1}|1-\frac{1}{j\nu}|}\left(1-\frac{1}{k}\right)^{n-h}\\
&\leq \frac{K}{r^2}\|g\|_k\sum_{h=1}^\infty t^h\left(1-\frac{1}{k}\right)^h(2+h)^{\alpha_0}=K'\|g\|_k,
\end{align*}
with $K'=\frac{K}{r^2}\sum_{h=1}^\infty t^h\left(1-\frac{1}{k}\right)^h(2+h)^{\alpha_0}<\infty$, by the ratio test, for instance.

We have established, for every $\nu\in B(\mu,\delta)$, that
\[
\|(C_t-\nu I)^{-1}g\|_k\leq (\frac{1}{r}+K')\|g\|_k.
\]
Since $g\in H(\D)$ and $k\geq 2$ are arbitrary, this shows that the set $\{(C_t-\nu I)^{-1}\,:\, \nu\in B(\mu,\delta)\}$ is equicontinuous. Hence, $\sigma^*(C_t;H(\D))=\Lambda_0$.\end{proof}

\begin{prop}\label{PowerMean-H(D)} For each $t\in [0,1)$ the  operator $C_t\colon H(\D)\to H(\D)$ is power bounded, uniformly mean ergodic but, it fails to be supercyclic. Moreover, $	(I-C_t)(H(\D))$ is the closed subspace of $H(\D)$ given by
	\begin{equation}\label{Imm}
		(I-C_t)(H(\D))=\{g\in H(\D)\, :\, g(0)=0\}
	\end{equation}
and we have the decomposition
\begin{equation}\label{dec}
	H(\D)=\Ker (I-C_t)\oplus (I-C_t)(H(\D)).
\end{equation}
\end{prop}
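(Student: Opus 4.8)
The plan is to verify the four assertions in turn; power boundedness is the decisive step, and the remaining ones follow with modest extra work.

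\emph{Power boundedness and mean ergodicity.} I would work with the fundamental system of sup-norms $|||\cdot|||_k$, $k\ge 2$, from Lemma \ref{L-seminorme}, in which $C_t$ turns out to be a contraction. Indeed, by \eqref{eq.rapp-serie} the $n$-th Taylor coefficient of $C_tf$ is $(C_t^\omega\hat f)_n=\frac{1}{n+1}\sum_{l=0}^n t^{n-l}\hat f(l)$; using $|\hat f(l)|\,r_k^l\le |||f|||_k$ and $r_k=1-\frac1k$,
\begin{align*}
	r_k^n\,|(C_t^\omega\hat f)_n| &\le \frac{r_k^n}{n+1}\sum_{l=0}^n t^{n-l}|\hat f(l)| \le \frac{|||f|||_k}{n+1}\sum_{l=0}^n (tr_k)^{n-l}\\
	&= |||f|||_k\,\frac{1-(tr_k)^{n+1}}{(n+1)(1-tr_k)}\le |||f|||_k,
\end{align*}
the last step because $1-x^{n+1}=(1-x)\sum_{i=0}^n x^i\le (n+1)(1-x)$ for $x=tr_k\in[0,1)$. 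Taking the supremum over $n$ gives $|||C_tf|||_k\le |||f|||_k$, and iterating yields $|||C_t^nf|||_k\le |||f|||_k$ for all $n\in\N_0$. Since $\{|||\cdot|||_k\}_{k\ge2}$ generates $\tau_c$, the set $\{C_t^n:n\in\N_0\}$ is equicontinuous, i.e. $C_t$ is power bounded. As $H(\D)$ is Fr\'echet--Montel, \cite[Proposition 2.8]{ABR0} then gives that $C_t$ is uniformly mean ergodic.

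\emph{Failure of supercyclicity.} Mirroring Proposition \ref{Dyn-Hv}, I would exhibit two eigenvalues of the transpose $C_t'$ and apply \cite[Proposition 1.26]{B-M}. Writing $\delta^{(n)}\in H(\D)'$ for the coefficient functional $f\mapsto\hat f(n)$, the formula for $(C_t^\omega\hat f)_n$ gives $C_t'\delta^{(0)}=\delta^{(0)}$ and $C_t'\delta^{(1)}=\frac t2\delta^{(0)}+\frac12\delta^{(1)}$, so $\delta^{(0)}$ and $\delta^{(1)}-t\delta^{(0)}$ are eigenvectors of $C_t'$ for the eigenvalues $1$ and $\frac12$. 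Hence $\sigma_{pt}(C_t';H(\D)')$ has at least two points and $C_t$ is not supercyclic.

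\emph{The image \eqref{Imm} and the decomposition \eqref{dec}.} Here I would exploit that $C_t$ is invertible with $C_t^{-1}=T_t$ (Proposition \ref{NonCompact}). From $I-C_t=C_t(T_t-I)$ and the bijectivity of $C_t$ one has $(I-C_t)(H(\D))=C_t\big((T_t-I)(H(\D))\big)$. A direct computation gives $(T_t-I)f(z)=z\,\frac{d}{dz}\big[(1-tz)f(z)\big]$, so $(T_t-I)(H(\D))\subseteq\{g:g(0)=0\}$; conversely, for $g(z)=zh(z)$ with $h\in H(\D)$ the function $f(z):=\frac{1}{1-tz}\int_0^z h(\xi)\,d\xi$ lies in $H(\D)$ (as $t<1$) and satisfies $(1-tz)f(z)=\int_0^z h$, hence $(T_t-I)f(z)=z\,h(z)=g(z)$. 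Thus $(T_t-I)(H(\D))=\{g:g(0)=0\}=\Ker\delta_0$. Since $C_tg(0)=g(0)$, the isomorphism $C_t$ maps $\Ker\delta_0$ onto itself, whence $(I-C_t)(H(\D))=C_t(\Ker\delta_0)=\{g:g(0)=0\}$, which is closed. Finally, by Proposition \ref{Spectrum-H(D)} the eigenspace $\Ker(I-C_t)$ is one-dimensional, spanned by $g_0(z)=\sum_{n=0}^\infty t^nz^n=(1-tz)^{-1}$ with $g_0(0)=1\ne0$; therefore $f\mapsto f(0)g_0$ is a continuous projection of $H(\D)$ onto $\Ker(I-C_t)$ with kernel $\{g:g(0)=0\}$, giving \eqref{dec}.

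\emph{Main obstacle.} The crux is power boundedness. Because $C_t$ is not compact on $H(\D)$ (Proposition \ref{NonCompact}), the compactness route of Theorem \ref{Th-ABR} used for $H^\infty_v$ and $H^0_v$ is unavailable; moreover, for $t\in(0,1)$ the weighted $\ell^1$-type norms $\|\cdot\|_k$ yield a per-step operator norm larger than $1$, which is useless under iteration. The key realization is that in the weighted sup-norms $|||\cdot|||_k$ the averaging factor $\frac1{n+1}$ exactly absorbs the geometric weight $(tr_k)^{n-l}$, making $C_t$ a contraction --- which is precisely why Lemma \ref{L-seminorme} records the system $\{|||\cdot|||_k\}$.
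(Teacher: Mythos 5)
Your argument is sound and, for the central claim, coincides with the paper's: power boundedness is obtained exactly as in the paper by showing $|||C_t^n f|||_k \le |||f|||_k$ for the weighted sup-norms of Lemma \ref{L-seminorme} (the paper uses the cruder bounds $t^{n-j}\le 1$ and $r_k^n\le r_k^j$ where you sum the geometric series $\sum_{l=0}^n (tr_k)^{n-l}$; both give the same contraction estimate), followed by \cite[Proposition 2.8]{ABR0} on the Fr\'echet--Montel space $H(\D)$. Where you genuinely diverge is in the remaining two parts. For non-supercyclicity the paper intertwines $C_t$ with the discrete operator $C_t^\omega$ through the dense-range embedding $\Phi\colon H(\D)\to\omega$ and quotes \cite[Theorem 6.1]{ABR-N}, whereas you exhibit the two eigenvectors $\delta^{(0)}$ and $\delta^{(1)}-t\delta^{(0)}$ of $C_t'$ (for the eigenvalues $1$ and $\frac12$; your computation is correct) and invoke \cite[Proposition 1.26]{B-M}. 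Your route is self-contained and avoids the $\omega$-result entirely, though you should note that the cited proposition is stated in the Banach setting, so one must observe that its angle/ratio argument carries over verbatim to the Fr\'echet space $H(\D)$ --- it does, since it only uses continuity of the two (linearly independent) functionals and density of the projective orbit. For \eqref{Imm} the paper writes down an explicit integral preimage $f(z)=\frac{1}{tz-1}\int_0^z(1-t\xi)\frac{h(\xi)}{\xi}\,d\xi$ with $h=(zg)'$ and verifies $(C_t-I)f=g$ by differentiation; your factorization $I-C_t=C_t(T_t-I)$, with $(T_t-I)f=z\left[(1-tz)f\right]'$, reduces everything to the elementary equation $\left[(1-tz)f\right]'=h$, solvable in $H(\D)$ precisely because $1-tz$ has no zero in $\D$ when $t<1$; this is essentially the same computation reorganized, but it is arguably cleaner and makes both the surjectivity onto $\Ker\delta_0$ and the closedness of the range transparent (your step $C_t(\Ker\delta_0)=\Ker\delta_0$ is justified since both $C_t$ and its inverse $T_t$ preserve the value at $0$). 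Finally, your proof of \eqref{dec} via the continuous projection $f\mapsto f(0)g_0$ onto $\Ker(I-C_t)={\rm span}\{g_0\}$ is exactly the argument the paper imports from Proposition \ref{Dyn-Hv}.
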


\begin{proof}
Fix $t\in [0,1)$.
We first prove that $C_t$ is power bounded. Once this is established, $C_t$ is necessarily uniform mean ergodic because $H(\D)$ is a Fr\'echet- Montel  space (see \cite[Proposition 2.8]{ABR0}).

Given $k\geq 2$ we have, for every $f\in H(\D)$ and with $r_k:=(1-\frac{1}{k})$, that
\begin{align*}
|||C_tf|||_k&=\sup_{n\in\N_0}\left|\frac{1}{n+1}\sum_{j=0}^n t^{n-j}\hat{f}(j)\right|r_k\leq \sup_{n\in\N_0}\frac{1}{n+1}\sum_{j=0}^n |\hat{f}(j)|r_k^n\\
&\leq\sup_{n\in\N_0}\frac{1}{n+1}\sum_{j=0}^n |\hat{f}(j)|r_k^j\leq \sup_{j\in\N_0}|\hat{f}(j)|r_k^j =|||f|||_k,
\end{align*}
because $r_k^n\leq r_k^j$ for all $j\in\{0,1,\ldots,n\}$.
It follows, for every $n\in\N$, that 
\[
|||C_t^nf|||_k\leq |||f|||_k,\quad f\in H(\D).
\]
Since $k\geq 2$ is arbitrary, the operator  $C_t\in \cL(H(\D))$ is indeed power bounded.

	To establish that  $C_t\colon H(\D)\to H(\D)$ is not supercyclic, note that the continuous embedding  $\Phi\colon H(\D)\to \omega$  has dense range. The operator $C_t^\omega\in \cL(\omega)$ satisfies  $\Phi\circ C_t=C_t^\omega\circ \Phi$ as an identity in $\cL(H(\D),\omega)$, which implies   if $C_t\colon H(\D)\to H(\D)$ is supercyclic, then also $C_t^\omega\colon \omega \to \omega $ must be supercyclic as $\Phi\circ C_t^n=\Phi\circ C_t\circ C_t^{n-1}=C_t^\omega\circ\Phi\circ C_t^{n-1}=\ldots =(C_t^\omega)^n\circ \Phi$, for all $n\in\N$, and  $\Phi(H(\D))$ is dense in $\omega$. A contradition with \cite[Theorem 6.1]{ABR-N}. 

%Another way to see this is to observe that  $C_t\colon \omega \to \omega $ implies $(C_t)'\circ j'=j'\circ (C_t)'$ with $(C_t)'\in \cL((\omega)_\beta')$. Since $(\omega)_\beta'$ is mapped continuously and one to one in $(H(\D))'_\beta$ via $j'$, it follows that $\sigma_{pt}((C_t)';(\omega)'_\beta)\subseteq \sigma_{pt}(C_t)';(H(\D))'_\beta)$. But by the proof of Theorem 6.1 in \cite{ABR-N} we know that $\Lambda=\sigma_{pt}((C_t)';(\omega)'_\beta)$ and hence, $\Lambda\subseteq\sigma_{pt}((C_t)';(H(\D)'_\beta)$.Hence,  $(C_t)'\in \cL((H(\D))'_\beta)$
%	 has a plenty of eigenvalues which implies that $C_t\in \cL(H(\D))$ cannot be
%	supercyclic, \cite[Proposition 1.26]{B-M}.
%	Since by \cite[Corollary 2.2]{ABR-N} we have that $\sigma((C_t)'; (H(\D))'_\beta)=\sigma(C_t;H(\D))=\Lambda$, it follows that
%	\begin{equation}\label{eq.SpDual}
%		\sigma_{pt}((C_t)';H(\D))'_\beta)=\sigma((C_t)'; (H(\D))'_\beta)=\Lambda.
%	\end{equation}
%By applying again  \cite[Corollary 2.2]{ABR-N} we have that $\sigma^*((C_t)'; (H(\D))'_\beta)\subseteq \sigma^*(C_t;H(\D))=\Lambda_0$ (cf. Proposition \ref{Spectrum-H(D)}). Since $\sigma^*((C_t)'; (H(\D))'_\beta)$ is closed and $\Lambda=\sigma((C_t)'; (H(\D))'_\beta)\subseteq \sigma^*((C_t)'; (H(\D))'_\beta)$, it follows that
%\begin{equation}
%	\sigma^*((C_t)'; (H(\D))'_\beta)=\Lambda_0.
%\end{equation}

To establish \eqref{Imm} note that $(I-C_t)(H(\D))\subseteq\{g\in H(\D)\, :\ g(0)=0\}$ because $C_tf(0)=f(0)$ for every $f\in H(\D)$. To show the reverse inclusion, let $g\in H(\D)$ satisfy $g(0)=0$. Then $h(z):=zg'(z)+g(z)$, for $z\in\D$, is holomorphic  and $h(0)=0$. Accordingly, also $z\mapsto \frac{h(z)}{z}$, for $z\in \D\setminus\{0\}$, and taking the value $h'(0)$ at $z=0$ is holomorphic in $\D$. Define $f\in H(\D)$ by
\[
f(z):=\frac{1}{tz-1}\int_0^z(1-t\xi)\frac{h(\xi)}{\xi}\,d\xi,\quad z\in\D,
\]
and note that $f(0)=0$. Direct calculation reveals that
\[
\frac{f(z)}{1-tz}-(zf(z))'=h(z)=(zg(z))',\quad z\in\D,
\]
from which it follows that
\[
\int_0^z\frac{f(\xi)}{1-t\xi}\,d\xi-zf(z)=zg(z),\quad z\in\D.
\]
Since $f(0)=0$, we can conclude that
\[
\frac{1}{z}\int_0^z\frac{f(\xi)}{1-t\xi}\,d\xi-f(z)=g(z),\quad z\in\D,
\]
that is, $(C_t-I)f=g$ and so $g\in (I-C_t)(H(\D))$. Hence, \eqref{Imm} is valid.

To show the validity of \eqref{dec} it suffices to repeat the argument given in the proof of Proposition \ref{Dyn-Hv}.
\end{proof}

\textbf{Acknowledgements.} The research of J. Bonet was partially supported by the project
PID2020-119457GB-100 funded by MCIN/AEI/10.13039/501100011033 and by
``ERFD A way of making Europe'' and by the project GV AICO/2021/170.

%\section{References}

\bigskip
\bibliographystyle{plain}

\end{document}